\newtheorem{theorem}{Theorem}[section]
\newtheorem{corollary}[theorem]{Corollary}
\newtheorem{lemma}[theorem]{Lemma}
\newtheorem{remark}[theorem]{Remark}
\DeclareMathOperator*{\volume}{\operatorname{vol}}
\begin{document}
	
\title[Upper bounds for volumes of generalized hyperbolic polyhedra]{Upper bounds for volumes of generalized hyperbolic polyhedra and hyperbolic links}

\author[Andrey Egorov]{Andrey Egorov}
\address{Sobolev Institute of Mathematics, 630090 Novosibirsk, Russia; 
Novosibirsk State University, 630090 Novosibirsk, Russia}
\email{a.egorov2@g.nsu.ru}

\author[Andrei Vesnin]{Andrei Vesnin}
\address{Sobolev Institute of Mathematics, 630090 Novosibirsk, Russia; 
Novosibirsk State University, 630090 Novosibirsk, Russia}
\email{vesnin@math.nsc.ru}

\thanks{The authors were supported by the Theoretical Physics and Mathematics Advancement Foundation "BASIS". A.V. was also supported by the state contract of the Sobolev Institute of Mathematics (project no. FWNF-2022-0004).}
	
\begin{abstract} 
 A polyhedron in a three-dimensional hyperbolic space is said to be generalized if finite, ideal and truncated vertices are admitted. In virtue of  Belletti's theorem (2021) the exact upper bound for volumes of generalized hyperbolic polyhedra with the same one-dimensional skeleton $G$ is equal to the volume of an ideal right-angled hyperbolic polyhedron whose one-dimensional skeleton is the medial graph for $G$.  In the present paper we give the upper bounds for the volume of an arbitrary generalized hyperbolic polyhedron, where the bonds linearly depend on the number of edges. Moreover, it is shown that the bounds can be improved if the polyhedron has triangular faces and  trivalent vertices. As an application there are obtained new upper bounds for the volume of the complement to the hyperbolic link having more than eight twists in a diagram.
 \end{abstract} 
	
\keywords{hyperbolic space, volumes of hyperbolic polyhedra, hyperbolic knots and links, augmented links} 
\subjclass[2000]{52B10, 51M10, 57M25}	
	
\maketitle
	

\section{Introduction}
	
We consider convex polyhedra of finite volume in the Lobachevsky space (hyperbolic space) $\mathbb H^3$. A polyhedron in a space of constant section curvature, $\mathbb S^n$, $\mathbb E^n$ or $\mathbb H^n$, is said to be \emph{acute-angled}, see~\cite{An70a}, or \emph{free from obtuse dihedral angles}, see~\cite{Co34}, if all its dihedral angles do not exceed $\pi/2$. In particular, a polyhedron is said to be \textit{right-angled} if all its dihedral angles are equal to $\pi/2$. It is well known that in the spherical space $\mathbb S^n$ any acute-angled polyhedron is a simplex~\cite[Theorem~1]{Co34}, and in the Euclidean space $\mathbb E^n$ any acute-angled polyhedron is a simplicial prism~\cite[Theorem~2]{Co34}.

The necessary and sufficient conditions for the realization of a polyhedron of a given combinatorial type with prescribed dihedral angles as an acute-angled polyhedron in $\mathbb H^3$ of finite volume are described by Andreev~\cite{An70a, An70b}, see also \cite{RHD07}. These conditions are formulated in the form of linear equations and inequalities, which are determined by the combinatorics of the one-dimensional skeleton (1-skeleton) of a polyhedron. Moreover, if a realization of a polyhedron in $\mathbb H^3$ exists, then it is unique up to the isometry of the space. Thus, the volume of an acute-angled hyperbolic polyhedron is completely determined by the combinatorics of its 1-skeleton and by dihedral angles. Denote $\overline{\mathbb H^3} = \mathbb H^3 \cup \partial \mathbb H^3$. The vertex of a hyperbolic polyhedron is said to be \textit{ideal} if it belongs to the absolute $\partial\mathbb H^3$. A polyhedron will be called \textit{ideal} if all its vertices are ideal. It follows from~\cite{An70b} that if $P$ is an ideal right-angled hyperbolic polyhedron, then each of its vertices is 4-valent, i.e. incident to exactly four edges. 

Calculation of the volume of a hyperbolic polyhedron given by its combinatorics and dihedral angles is a rather difficult problem. A solution of this problem for a particular family of tetrahedra goes back to Lobachevsky. Some modern results and methods related to the problem are presented in works of Milnor~\cite{Mi82}, Kellerhals~\cite{Ke89}, Vinberg~\cite{Vi93}, Kashaev~\cite{Ka97}, Cho and Kim~\cite{CK99}, Murakami and Yano~\cite{MY05}, where polyhedra with finite, ideal, or truncated vertices were under considerations. Moreover, for some classes of hyperbolic polyhedra of fixed combinatorics, such as simplexes and pyramids, there are known volumes bounds depending of number of vertices or edges. Due to the Mostow rigidity theorem, calculations of volumes and volume bounds have strigthforward  applications in the theory of hyperbolic 3-manifolds and in the knot theory~\cite{Th80}.  

Below in the formulae for the volumes of three-dimensional hyperbolic polyhedra and manifolds we will use the \emph{Lobachevsky function} introduced by Milnor in~\cite{Mi82}, 
$$
\Lambda(\theta) = - \int\limits_0^{\theta} \log | 2 \sin (t) | \, {\rm d} t.
$$
To formulate results on upper and lower volume bounds the two constants will be used which have the following values with an accuracy of up to six digits: 
$$
v_{tet} = 3\Lambda (\pi/3)= 1.014941 \quad \text{and} \qquad v_{oct} = 8\Lambda (\pi/4) = 3.663863.
$$ 
Approximate numerical values of quantities expressed in terms of the Lobachevsky function will be given with the same accuracy up to six digits. 

In the preset paper we will give the upper bounds for the volume of generalized hyperbolic polyhedra, where the bounds linearly depend on the number of edges. In Section~\ref{sec2} we recall the definition of a generalized hyperbolic polyhedron. It was shown by Belletti in~\cite{Be21} that the maximum volume of generalized hyperbolic polyhedra with the same 1-skeleton is achieved on the corresponding ideal right-angled hyperbolic polyhedron, see Theorem~\ref{theoremBeletti}. Bounds for the volumes of ideal right-angled hyperbolic polyhedra in terms of the number of vertices were previously obtained in~\cite{ABEV, At09, EV20-1, EV20-2}. Basing on these results, in the Theorem~\ref{theoremPolyhedraBounds} we obtain the upper bounds for the volumes of generalized hyperbolic polyhedra given as a linear function of the number of edges. 

\smallskip 
\noindent 
\textbf{Theorem~2.2.} \emph{Let $\Gamma$ be a 3-connected planar graph with $E$ edges, and $P$ be a generalized hyperbolic polyhedron for which $\Gamma$ is a 1-skeleton. Then the following inequalities hold. 
	\begin{itemize}
		\item[(a)] If $P$ is a tetrahedron, then $\volume(P) \leq v_{oct}$.
		\item[(b)] If $P$ is not a tetrahedron, then
		$$
		\volume (P) \leq \frac{v_{oct}}{2} \cdot E - \frac{5 v_{oct}}{2}.
		$$
		\item[(c)] If  $E > 24$, then
		$$
		\volume (P) \leq \frac{v_{oct}}{2} \cdot E - 3 v_{oct}.
		$$
	\end{itemize}
}

\smallskip 

The Section~\ref{sec3} deals with the case when there is an additional information about the combinatorics of a generalized polyhedron. Namely, in the Theorem~\ref{theoremTrianglesBounds} the upper bounds for volumes are obtained by taking into account the number of triangular faces and trivalent vertices of the polyhedron. 

\smallskip
\noindent
\textbf{Theorem~3.4.} \emph{Let $\Gamma$ be a 3-connected planar graph with $E$ edges, and $P$ be a generalized hyperbolic polyhedron for which $\Gamma$ is the 1-skeleton.
	\begin{itemize}
		\item[(a)] If $P$ has $V_3$ trivalent vertices and $p_3$ triangular faces, then
		$$
		\volume(P) \leqslant 2v_{tet} \cdot \left(E - \frac{p_3+V_3+8}{4} \right).
		$$
		\item[(b)] If all vertices of $P$ are trivalent and there are $p_3$ triangular faces, then
		$$
		\volume(P) \leqslant \frac{5v_{tet}}{3} \left( E - \frac{3 p_3 + 24}{10} \right).
		$$
	\end{itemize}
}
\smallskip 

In Section~\ref{sec4} we provide  examples of applying bounds from  Theorems~\ref{theoremPolyhedraBounds} and~\ref{theoremTrianglesBounds} to three infinite fa\-mi\-li\-es of generalized hyperbolic polyhedra: pyramids, prisms and pyramids with two apexes. In Section~\ref{sec5} we present the relationship between the volumes of hyperbolic polyhedra and bounds for the volumes of hyperbolic knots and links via the number of twists in their diagrams. Relations of such type were previously discussed in~\cite{Ad17-1, DT15, La04, Pu20}. In the Theorem~\ref{theoremKnotsBounds} we obtain an upper bound for the volumes of hyperbolic knots and links with the number of twists in the diagram greater than eight. 

\smallskip
\noindent
\textbf{Theorem~5.1.} \emph{Let $D$ be a hyperbolic diagram of a link $K$ with $t(D)$ twists. If $t(D)>$8, then}
	$$
	\volume{(S^3 \setminus K)} \leq 10 v_{tet} \cdot (t(D)-1.4). 
	$$

Finally we demonstrate that the bound from Theorem~5.1 improves the previously known bounds. 
	
\section{Volume of a generalized hyperbolic polyhedron} \label{sec2} 

To define a generalized hyperbolic polyhedron we will use a projective model of a hyperbolic space and follow~\cite{BB02, Be21, Th80, Us06}.   Consider the symmetric bilinear form defined on $\mathbb R^4$ as 
$$
\langle {\bf x}, {\bf y} \rangle = -x_0 y_0 + x_1 y_1 + x_2 y_2 + x_3 y_3.
$$ 
With the standard embedding of $\mathbb R^3$ in $\mathbb {RP}^3$, which maps the point $(x_1, x_2, x_3) \in \mathbb R^3$ to the point in $\mathbb {RP}^3$ with homogeneous coordinates $(1, x_1, x_2, x_3)$, a subset $\mathbb H^3$ corresponds to an open unit ball in $\mathbb R^3$. At the same time, geodesics in $\mathbb H^3$ are intersections of $\mathbb H^3$ with projective lines from $\mathbb{RP}^3$ or, equivalently, with lines from $\mathbb R^3\subset \mathbb{RP}^3$. Similarly, the (totally geodesic) hyperbolic planes in $\mathbb H^3$ correspond to nonempty intersections of $\mathbb H^3$ and projective planes from $\mathbb{RP}^3$, or equivalently,  with affine planes from $\mathbb R^3$.

In the projective model of the hyperbolic space $\mathbb H^3$, the following duality holds. For a $k$-dimensional, $0\leq k\leq2$, projective subspace $\ell\subset\mathbb{RP}^3$, consider the corresponding $(k+1)$-dimensional linear subspace $L\subset\mathbb R^4$. Then the subspace $L^{\perp}$, orthogonal to $L$ with respect to the form $\langle\,  \, ,  \, \,\rangle$ introduced above, is a $(3-k)$-dimensional linear subspace in $\mathbb R^4$ and defines $(2-k)$-dimensional projective subspace $\ell^{\perp} \subset \mathbb {RP}^3$. In particular, if $x\in\mathbb{RP}^3\setminus\overline{\mathbb H}^3$, then $x^{\perp}$ is a plane that intersects $\mathbb H^3$, and the point $x$ is called \emph{hyperideal}.

The realization of a convex Euclidean polyhedron in the projective model of the space $\mathbb H^3$ will be called a \emph{generalized hyperbolic polyhedron} if each of its vertices is finite, ideal or hyperideal. In this case, each edge of the polyhedron must contain internal points of the hyperbolic space. To each hyperideal point $p$ we assign a \emph{polar plane} $\Pi_p\subset \mathbb H^3$, which is a plane orthogonal to all lines passing through $\mathbb H^3$ and $p$. The plane $\Pi_p$ divides $\mathbb H^3$ into two half-spaces, denote by $H_p \subset \mathbb H^3$ the one that contains $0\in \mathbb R^3$. A generalized hyperbolic polyhedron $P$ will be called \emph{proper} if for each hyperideal vertex $v$ of the polyhedron $P$ the interior of the half-space $H_v$ contains all the finite vertices of the polyhedron $P$.
	
Let $P$ be a generalized hyperbolic polyhedron and $U(P)$ be the set of all its hyperideal vertices. We define \emph{truncation} $P_{tr}$ of a generalized hyperbolic polytope $P$ as the following set:
$$
P_{tr}=P \bigcap_{v \in U(P)} H_v.
$$
Then \emph{volume of the generalized polyhedron} $P$ is defined as the volume of its truncation $P_{tr}$. Note that if the polyhedron $P$ is proper, then the dihedral angles at the new edges arising after truncation are equal to $\pi/2$.

Following~\cite{Be21} we will say a polyhedron $\overline{\Gamma} \subset\mathbb R^3\subset\mathbb{RP}^3$ is a \emph{rectification} of a 3-connected planar graph $\Gamma$ if the 1-skeleton  $\overline{\Gamma}$ coincides with  $\Gamma$ and all edges of $\overline{\Gamma}$ are tangent to $\partial H^3$. Notice that $\overline{\Gamma}$ is not a generalized hyperbolic polyhedron since none of its edges intersect $\mathbb H^3$. Nevertheless, for $\overline{\Gamma}$, it is possible, as above, to define a truncation $\overline{\Gamma}_{tr}$, which will be an ideal right-angled polyhedron whose 1-skeleton is the medial graph for $\Gamma$. By the volume $\operatorname{vol} (\overline{\Gamma})$ of the rectification $\overline{\Gamma}$ we will understand the volume $\operatorname{vol} (\overline{\Gamma}_{tr})$ of its truncation $\overline{\Gamma}_{tr}$.

\smallskip 

In~\cite[Corollary~10]{At11} Atkinson obtained the following upper bound. Let $P$ be a non-obtuse  hyperbolic polyhedron cantaining $V_3$ trivalent vertices and $V_4$ quadrivalent vertices. Then
\begin{equation}
\volume(P) < \frac{2 V_4 + 3 V_3 - 2}{4} \cdot v_{oct} + \frac{15 V_3 + 20 V_4}{16} \cdot v_{tet}. \label{eqn1}
\end{equation}

In~\cite{Be21} Beletti established that the volume of an arbitrary generalized hyperbolic polyhedron can be estimated from above by the volume of an ideal right-angled hyperbolic polyhedron constructed from its 1-skeleton.

\smallskip
	
\begin{theorem} {\rm \cite[Theorem~4.2]{Be21}} \label{theoremBeletti}
	For any 3-connected planar graph $\Gamma$, 
	$$
	\sup_{P} \volume (P) = \volume (\overline{\Gamma}),
	$$
	where $P$ varies among all proper generalized hyperbolic polyhedra with 1-sceleton $\Gamma$ and 
	$\overline{\Gamma}$ is the rectification of  $\Gamma$.
\end{theorem}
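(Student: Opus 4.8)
The plan is to prove Theorem~\ref{theoremBeletti} in two parts: first establishing that $\volume(\overline{\Gamma})$ is an upper bound for $\volume(P)$ over all proper generalized hyperbolic polyhedra $P$ with $1$-skeleton $\Gamma$, and then showing this bound is sharp by exhibiting a sequence of proper polyhedra whose volumes converge to it. For the upper bound, the key idea is a continuity and monotonicity argument. Given a proper generalized hyperbolic polyhedron $P$ with $1$-skeleton $\Gamma$, one would like to deform $P$ by pushing all of its vertices simultaneously toward the absolute, and past it, until every vertex becomes hyperideal and every edge becomes tangent to $\partial\mathbb H^3$ --- that is, until $P$ approaches the rectification $\overline{\Gamma}$. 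The heart of the matter is then to check that throughout such a deformation the volume of the truncation $P_{tr}$ only increases. This is where I expect to use a Schläfli-type variational argument together with the observation that truncating a polyhedron at a hyperideal vertex and then letting that vertex recede to make the polar plane pass through it corresponds to adding a right-angled piece; the dihedral angles at the truncation faces stay equal to $\pi/2$ by properness, so the Schläfli differential formula $d\volume = -\tfrac12\sum_e \ell_e\, d\theta_e$ shows that decreasing the dihedral angles at the original edges of $\Gamma$ (which happens as finite vertices move outward toward the boundary) increases the volume.

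The main obstacle, and the step requiring the most care, is controlling the geometry near vertices that are already ideal or hyperideal in $P$, and making sure the deformation stays within the class of \emph{proper} generalized polyhedra so that the truncation remains well-defined and the new edges keep their right angles. One has to verify that the combinatorics of the truncation does not change in a way that invalidates the monotonicity --- in particular that no face degenerates and no two truncation planes cross inside the polyhedron before the limit is reached. I would handle this by working with the description of generalized polyhedra via their supporting half-spaces in $\mathbb R^{3,1}$ and tracking the Gram matrix of the outward normals: the realization space of polyhedra combinatorially equal to $\overline\Gamma_{tr}$ with the prescribed right angles is, by Andreev's theorem and its ideal analogue, a single point up to isometry, so the deformation is forced to limit onto exactly $\overline\Gamma_{tr}$.

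For the reverse inequality $\sup_P \volume(P) \geq \volume(\overline\Gamma)$, I would construct explicit proper generalized polyhedra approximating $\overline\Gamma$: perturb the edges of the rectification $\overline\Gamma$ slightly inward so that each edge genuinely meets $\mathbb H^3$, obtaining a proper generalized polyhedron $P_\varepsilon$ with $1$-skeleton $\Gamma$ whose vertices are hyperideal and very far out; as $\varepsilon\to 0$ the truncations $(P_\varepsilon)_{tr}$ converge geometrically to $\overline\Gamma_{tr}$, hence $\volume((P_\varepsilon)_{tr})\to\volume(\overline\Gamma)$ by continuity of volume under geometric convergence of polyhedra of uniformly bounded volume. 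Combining the two inequalities gives the stated supremum. I would also remark that the supremum is not attained, since any polyhedron realizing it would have all edges tangent to the absolute and thus not be a generalized hyperbolic polyhedron in the sense defined above.
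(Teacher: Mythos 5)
First, note that the paper does not prove this statement at all: it is quoted verbatim from Belletti \cite[Theorem~4.2]{Be21} and used as a black box, so there is no internal proof to compare your sketch against; it has to be measured against Belletti's actual argument. Your outline does capture the two correct halves of that argument --- sharpness, by approximating the rectification with nearby proper polyhedra whose vertices are all hyperideal (this half of your sketch is essentially complete), and an upper bound via the Schl\"{a}fli formula $d\volume = -\tfrac12\sum_e \ell_e\, d\theta_e$, which indeed shows that volume increases when the dihedral angles along the edges of $\Gamma$ all decrease while the truncation angles stay at $\pi/2$.

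The gap is in the upper-bound half, and it sits exactly where the real content of Belletti's theorem lies. You posit a deformation pushing every vertex of an arbitrary proper generalized polyhedron $P$ out to (and past) the absolute until all edges become tangent to $\partial\mathbb H^3$, and you assert that the volume increases along the way; but you never establish that such a path \emph{exists} inside the space of proper generalized polyhedra with $1$-skeleton $\Gamma$, that all dihedral angles can be decreased simultaneously and monotonically to $0$, or that no edge length collapses and no combinatorial degeneration occurs before the limit is reached. For polyhedra all of whose vertices are hyperideal one can lean on the Bao--Bonahon parametrization of the realization space by dihedral angles, but for mixed finite/ideal/hyperideal vertices no such parametrization is available, and the transition of a vertex from finite to ideal to hyperideal changes the combinatorics of the truncation, so the Schl\"{a}fli formula cannot be applied naively across it. Your appeal to the rigidity of the limiting object ($\overline{\Gamma}_{tr}$ is unique by Andreev's theorem) does not help: uniqueness of the endpoint says nothing about the existence or nondegeneracy of a monotone path to it. Belletti's proof is devoted precisely to circumventing these difficulties (by reducing to the hyperideal case and running a compactness/degeneration analysis on maximizing sequences), so as written your upper bound is a plausible plan rather than a proof.
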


By definition, the volume of the rectification $\overline{\Gamma}$ is equal to the volume of the polyhedron $\overline{\Gamma}_{tr}$ that is  an ideal right-angled hyperbolic polyhedron such that its 1-skeleton is the medial graph of the graph $\Gamma$. By construction, all vertices of $\overline{\Gamma}_{tr}$ are quadrivalent. 
Recall that if $G$ is a plane embedding of a graph then \emph{medial graph} for it is a graph $M(G)$ such that the vertices of $M(G)$ correspond one-to-one to the edges of $G$ and for each face $G$ if two edges in it go sequentially then the corresponding vertices from $M(G)$ are connected by an edge.

The initial list of ideal right-angled polyhedra is presented in~\cite{EV20-1}, where the first $248$ values of the volumes of such polyhedra are also computed. A well-known infinite family of ideal right-angled polyhedra is the family of $n$-antiprisms for integers $n\geq 3$. In particular, the $3$-antiprism is an octahedron. The formula for the volumes of ideal $n$-antiprisms with cyclic symmetry was obtained by Thurston~\cite{Th80} in connection with the calculation of the volumes of the family of chain links. The arithmeticity of the groups generated by reflections in the faces of ideal right-angled antiprisms (and, consequently, the arithmeticity of the groups of the corresponding chain links) was investigated in papers~\cite{Ke22} and~\cite{MMT20}.

Two-sided bounds for the volumes of ideal right-angled hyperbolic polyhedra in terms of the number of their vertices were obtained by Atkinson~\cite[Theorem~2.2]{At09}. Namely, if $P$ is an ideal right-angled hyperbolic polyhedron with $V$ vertices, then
\begin{equation}
\frac{v_{oct}}{4} \cdot V - \frac{v_{oct}}{2} \leqslant \volume (P) \leqslant \frac{v_{oct}}{2} \cdot V - 2 v_{oct}. \label{eqn2}
\end{equation}
At the same time, both inequalities turn into equalities when $P$ is an ideal right-angled octahedron, that is, when $V=6$.

An ideal right-angled octahedron is the unique ideal right-angled polyhedron with $V=6$, and its volume is $v_{oct}$.  The next ideal right-angled polyhedra have $V\geq 8$ vertices, and the upper bound can be improved. Namely, it is shown in~\cite[Theorem~2.3]{EV20-2} that if $P$ is an ideal right-angled hyperbolic polyhedron with $V\geq 8$ vertices, then
\begin{equation}
\volume (P) \leqslant \frac{v_{oct}}{2} \cdot V - \frac{5v_{oct}}{2}. \label{eqn3}
\end{equation}
	
The volumes of polyhedra with the number of vertices $V\leq 21$ were tabulated in~\cite{EV20-1}. Then it was shown in \cite[Theorem~1.3]{ABEV}  that the upper bound (\ref{eqn2}) can be improved if we don't consider polyhedra with $V\leq 24$ vertices. Namely, by virtue of ~\cite[Theorem~2.3]{ABEV}, if $P$ is an ideal right-angled hyperbolic polyhedron with $V > 24$ vertices, then
	\begin{equation}
	\volume (P) \leqslant \frac{v_{oct}}{2} \cdot V - 3v_{oct}. \label{eqn4}
	\end{equation}

\smallskip 
		
\begin{theorem} \label{theoremPolyhedraBounds}
	Let $\Gamma$ be a 3-connected planar graph with $E$ edges, and $P$ be a generalized hyperbolic polyhedron for which $\Gamma$ is a 1-skeleton. Then the following inequalities hold.
	\begin{itemize}
		\item[(a)] If $P$ is a tetrahedron, then $\volume(P) \leq v_{oct}$.
		\item[(b)] If $P$ is not a tetrahedron, then
		$$
		\volume (P) \leq \frac{v_{oct}}{2} \cdot E - \frac{5 v_{oct}}{2}.
		$$
		\item[(c)] If the number of edges $E > 24$, then
		$$
		\volume (P) \leq \frac{v_{oct}}{2} \cdot E - 3 v_{oct}.
		$$
	\end{itemize}
\end{theorem}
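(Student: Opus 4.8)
The plan is to deduce all three inequalities from Belletti's Theorem~\ref{theoremBeletti} together with the known volume estimates (\ref{eqn2}), (\ref{eqn3}) and (\ref{eqn4}) for ideal right-angled hyperbolic polyhedra. The single piece of bookkeeping that makes everything fit is that the medial graph $M(\Gamma)$ has exactly one vertex for each edge of $\Gamma$; hence the truncated rectification $\overline{\Gamma}_{tr}$ is an ideal right-angled hyperbolic polyhedron with precisely $E$ vertices. Therefore $\volume(\overline{\Gamma}) = \volume(\overline{\Gamma}_{tr})$ is controlled by the estimates above with the number of vertices $V$ replaced by $E$, and by Theorem~\ref{theoremBeletti} this quantity is an upper bound for $\volume(P)$ for every proper generalized hyperbolic polyhedron with $1$-skeleton $\Gamma$.

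For part~(a), a tetrahedron has $\Gamma = K_4$, so $E = 6$ and $M(K_4)$ is the octahedral graph; thus $\overline{\Gamma}_{tr}$ is the ideal right-angled octahedron, $\volume(\overline{\Gamma}) = v_{oct}$, and Theorem~\ref{theoremBeletti} gives $\volume(P) \le v_{oct}$.

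For part~(b) I would first record an elementary fact: a $3$-connected planar graph has either $E = 6$ edges, in which case it is $K_4$, or $E \ge 8$ edges. This is a one-line Euler-characteristic computation: $3$-connectivity forces minimum degree $3$, so $2E \ge 3V$, and every face is bounded by at least three edges, so $2E \ge 3F$; substituting into $V - E + F = 2$ gives $E \ge 6$ with equality only for $K_4$, while $E = 7$ would force $V + F = 9$ with $V \le 4$ and $F \le 4$, a contradiction. Consequently, if $P$ is not a tetrahedron then $\overline{\Gamma}_{tr}$ has $V = E \ge 8$ vertices, and (\ref{eqn3}) yields $\volume(\overline{\Gamma}) \le \frac{v_{oct}}{2}E - \frac{5 v_{oct}}{2}$. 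Part~(c) is identical with (\ref{eqn4}) in place of (\ref{eqn3}): if $E > 24$ then $\overline{\Gamma}_{tr}$ has more than $24$ vertices, so $\volume(\overline{\Gamma}) \le \frac{v_{oct}}{2}E - 3 v_{oct}$. In each case Theorem~\ref{theoremBeletti} transfers the estimate to $P$.

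The only real obstacle is that Theorem~\ref{theoremBeletti} is stated for \emph{proper} generalized hyperbolic polyhedra, whereas the conclusion is claimed for arbitrary ones. To close this gap I would verify that the volume of a non-proper generalized hyperbolic polyhedron still cannot exceed $\volume(\overline{\Gamma})$ --- for instance by approximating $P$ by proper polyhedra with the same $1$-skeleton whose volumes stay at least as large as $\volume(P)$ in the limit, or by decomposing the truncation $P_{tr}$ along the polar planes into pieces each dominated by a proper polyhedron over $\Gamma$. I expect this reduction, rather than any of the numerical inputs, to be where the care is needed; once it is settled, the three bounds follow at once from the discussion above.
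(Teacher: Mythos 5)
Your proposal follows exactly the paper's route: the paper's entire proof is the one-line observation that the result follows from Theorem~\ref{theoremBeletti} together with the bounds (\ref{eqn2}), (\ref{eqn3}), (\ref{eqn4}), using precisely your bookkeeping that $\overline{\Gamma}_{tr}$ has $E$ vertices. The details you supply beyond that --- the Euler-characteristic check that a non-tetrahedral $3$-connected planar graph has $E\ge 8$, and the caveat that Theorem~\ref{theoremBeletti} is stated for \emph{proper} polyhedra while the claim is for arbitrary ones --- are points the paper leaves implicit, and your treatment of them is sound.
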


\begin{proof}
	It follows from the Theorem~\ref{theoremBeletti} and the formulae (\ref{eqn2}), (\ref{eqn3}), (\ref{eqn4}).
\end{proof}

It is well known, see, for example,~\cite{Br05}, that for every ideal right-angled polyhedron its skeleton is the medial graph for two polyhedra combinatorially dual to each other.
		
\section{Polyhedra with trivalent vertices and triangular faces} \label{sec3}

Note that if the polyhedron $P$ has some special combinatorial properties, then the upper bound for its volume can be improved. In this section, we will present improvements in the case when the information about the numbers of trivalent vertices and triangular faces is used.

First of all, we consider the \emph{regular} ideal $n$-gonal bipyramid $B_n^r$, $n\geq 3$, see~\cite{Ad17-1}. Regular means that $B_n^r$ is obtained by gluing together $n$ copies of an ideal tetrahedron $T_n$ around a common edge, where $T_n$ is given by the dihedral angles $\frac{2\pi}{n}$, $\frac{(n-2)\pi}{2n}$ and $\frac{(n-2)\pi}{2n}$ for edges incident to one of the vertices and the requirement that the dihedral angles for opposite edges of the tetrahedron are equal. That is, following the notation for ideal hyperbolic tetrahedra from~\cite{Mi82}, we can write that $T_n = T(\frac{2\pi}{n}, \frac{\pi}{2} - \frac{\pi}{n}, \frac{\pi}{2} - \frac{\pi}{n})$. As shown in ~\cite[Theorem~2.1]{Ad17-1}, the maximum volume of an ideal $n$-bipyramide is reached when it is regular. The formula for the volume of the tetrahedron $T_n$ is given in ~\cite{Ad17-1} in the following form:
$$
\volume(T_n) = \int_{0}^{2\pi/n} -2\ln(\sin\theta) d\theta + 2\int_{0}^{\pi (n-1)/2n} - 2\ln(\sin\theta) d\theta.
$$
By~\cite{Mi82}, this volume can also be written in terms of the Lobachevsky function as follows:
$$
\volume(T_n) = \Lambda \left( \frac{2\pi}{n} \right) + 2 \Lambda \left( \frac{\pi}{2} - \frac{\pi}{n} \right) = 2 \Lambda \left( \frac{\pi}{n} \right),
$$
where we used the identities $\Lambda(2x) = 2\Lambda(x) + 2\Lambda (x +\frac{\pi}{2})$ and $\Lambda(-\theta) = - \Lambda (\theta)$.
Thus,
$$
\volume (B_n^r) = 2n \Lambda \left( \frac{\pi}{n} \right).
$$
Below we will use this equality to estimate the volume of an ideal right-angled polyhedron.

\smallskip

\begin{lemma} \label{lemma4.1}
	Let $P$ be an ideal right-angled hyperbolic polyhedron. Denote by $p_n$, $n\geq 3$, the number of its $n$-gonal faces. Then
	\begin{equation}
	\volume(P) \leqslant \sum_{n \geq 3} \Lambda \left( \frac{\pi}{n} \right) p_n n - 4v_{tet}. \label{eqn5}
	\end{equation}
\end{lemma}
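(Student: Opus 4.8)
The plan is to cone $P$ from a single vertex and estimate the resulting pyramids one at a time. Since $P$ is an ideal right-angled polyhedron, all of its vertices are ideal and, as recalled in the introduction, $4$-valent; fix one vertex $O$, and let $f_1,\dots,f_m$ be the faces of $P$ \emph{not} incident to $O$, where $f_j$ is an ideal $n_j$-gon. The fan decomposition of $P$ from the vertex $O$ writes $P$ as a union of the ideal pyramids $\mathrm{Pyr}(O,f_j):=\overline{\mathrm{conv}}(\{O\}\cup f_j)$ with pairwise disjoint interiors, all of finite volume, while the four faces of $P$ through $O$ are coned to themselves and contribute nothing. Hence
$$
\volume(P)=\sum_{j=1}^{m}\volume\bigl(\mathrm{Pyr}(O,f_j)\bigr).
$$

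The crucial ingredient is the following estimate for one pyramid: if $\sigma$ is an ideal $n$-gon and $p$ an ideal point off the plane of $\sigma$, then $\volume(\mathrm{Pyr}(p,\sigma))\leqslant n\,\Lambda(\pi/n)=\tfrac12\volume(B_n^{r})$. I would prove this by sending $p$ to $\infty$ in the upper half-space model: the plane of $\sigma$ becomes a hemisphere over a disk bounded by a circle $C$, the vertices of $\sigma$ lie on $C$, and $\mathrm{Pyr}(p,\sigma)$ is the set of vertical geodesics over the Euclidean polygon $\widehat\sigma$ inscribed in $C$ with those vertices. After normalising $C$ to the unit circle about the origin, integrating the volume form $t^{-3}\,dx\,dy\,dt$ collapses to $\tfrac12\int_{\widehat\sigma}(1-x^2-y^2)^{-1}\,dx\,dy$; splitting $\widehat\sigma$ into the triangles cut off by its edges and the centre, and using the identities $\Lambda(2x)=2\Lambda(x)+2\Lambda(x+\tfrac\pi2)$, $\Lambda(-\theta)=-\Lambda(\theta)$ and $\Lambda(\theta+\pi)=\Lambda(\theta)$ quoted above, one obtains
$$
\volume\bigl(\mathrm{Pyr}(p,\sigma)\bigr)=\sum_{i=1}^{n}\Lambda\!\left(\frac{\alpha_i}{2}\right),\qquad \alpha_i>0,\ \ \sum_{i=1}^{n}\alpha_i=2\pi,
$$
where $\alpha_i$ is the arc of $C$ subtended by the $i$-th edge of $\sigma$. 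It remains to maximise the right-hand side over the closed simplex $\{\alpha_i\geqslant0,\ \sum\alpha_i=2\pi\}$: the function is continuous there, its only non-degenerate critical point is the regular configuration $\alpha_i\equiv2\pi/n$ (since $\Lambda'$ is injective on $(0,\pi/2)$), where it equals $n\Lambda(\pi/n)$, while on the boundary it reduces to the same problem with fewer arcs, which is smaller by the monotonicity of $n\mapsto n\Lambda(\pi/n)$; an easy induction (base case $\Lambda(\tfrac{\alpha_1}{2})+\Lambda(\pi-\tfrac{\alpha_1}{2})=0$) finishes the pyramid estimate. One may also phrase this as doubling $\mathrm{Pyr}(p,\sigma)$ in the plane of $\sigma$ and invoking the maximality of the regular ideal bipyramid $B_n^{r}$, see~\cite{Ad17-1}, although the doubled polyhedron need not be convex.

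Granting the pyramid estimate and recalling that each $n_j\geqslant3$, we get
$$
\volume(P)\ \leqslant\ \sum_{j=1}^{m}n_j\,\Lambda\!\left(\frac{\pi}{n_j}\right)\ =\ \sum_{n\geqslant3}\Lambda\!\left(\frac{\pi}{n}\right)p_n\,n\ -\ \sum_{f\ni O}n_f\,\Lambda\!\left(\frac{\pi}{n_f}\right),
$$
the last sum running over the four faces of $P$ containing $O$. Since $n\mapsto n\Lambda(\pi/n)=\tfrac12\volume(B_n^{r})$ is increasing for $n\geqslant3$ (a short computation with $\Lambda'(\theta)=-\log|2\sin\theta|$, or the monotonicity of $\volume(B_n^{r})$ in $n$), each of these four faces contributes at least $3\Lambda(\pi/3)=v_{tet}$, so the subtracted sum is at least $4v_{tet}$, which yields~\eqref{eqn5}.

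The main obstacle is the pyramid estimate, and inside it the bookkeeping for the identity $\volume(\mathrm{Pyr}(p,\sigma))=\sum_i\Lambda(\alpha_i/2)$: the central-triangle decomposition must be read with signs when the circumcentre of $C$ falls outside $\widehat\sigma$ (equivalently, when some arc $\alpha_i$ exceeds $\pi$, equivalently when the apex ``leans out'' past an edge of $\sigma$ and the naive double of the pyramid fails to be convex). Once that formula is established uniformly, the extremal problem on the simplex and the assembly above are routine.
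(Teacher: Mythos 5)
Your proposal is correct and takes essentially the same route as the paper: cone $P$ from one ideal vertex into pyramids over the faces, bound each ideal $n$-gonal pyramid by $\tfrac12\volume(B_n^r)=n\Lambda(\pi/n)$ (the paper simply cites the maximality of the regular bipyramid from~\cite{Ad17-1}, which you optionally rederive via the upper half-space computation), and recover the $-4v_{tet}$ term from the four degenerate pyramids over the faces incident to the apex. The only cosmetic difference is that you omit those four faces from the sum at the outset rather than subtracting their contribution afterwards.
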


\begin{proof}
Denote by $\partial P$ the surface of the polyhedron $P$, which naturally splits into polygons corresponding to the faces of $P$. Let us choose  a vertex $v$ of $P$ and connect $v$ with other vertices of $P$ by geodesic lines. Thus, we obtain a subdivision of $P$ into pyramids with apex $v$ over polygons splitting of $\partial P$. For each resulting $n$-gonal pyramid, consider its double, which is an ideal $n$-gonal bipyramid. Since the maximum volume of an ideal $n$-gonal bipyramid is reached when it is regular~\cite[Theorem~2.1]{Ad17-1}, the volume of each of the $n$-gonal pyramids under consideration is bounded by $\frac{1}{2}\volume(B_n^r)$, where, as well as above, the regular $n$-gonal bipyramid is denoted by $B_n^r$. Since $\volume (B_n^r) = 2n\Lambda\left(\frac{\pi}{n}\right)$, we get
$$
\volume(P) \leqslant \sum_{n \geq 3} \Lambda \left( \frac{\pi}{n} \right) p_n n.
$$
Under the construction, four pyramids, based on the faces incident to $v$, degenerate. Their contribution to the volume bound was no less than the sum of the volumes of four regular ideal tetrahedra, since  
$$
4\cdot\frac{1}{2}\volume(B_3^r) = 4\cdot 3\Lambda\left( \frac{\pi}{3}\right) =4 \cdot v_{tet}.
$$
Thus, the inequality (\ref{eqn5}) is obtained.
\end{proof}

By~\cite[Theorem~2.2]{Ad17-1}, there is a bound $\operatorname{vol}(B_n^r) \leq 2 \pi\ln(n/2)$ for $n\geq 3$, with $\operatorname{vol} (B_n^r)$ growing asymptotically as $2\pi\ln(n/2)$ for $n\to\infty$. Using this bound for the volume of a regular bipyramid along with the inequality (\ref{eqn5}), we obtain the following result. 

\smallskip

\begin{corollary} \label{cor4.2}
Let $P$ be an ideal right-angled hyperbolic polyhedron. Denote by $p_n$, $n\geq 3$, the number of its $n$-gonal faces. Then
\begin{equation}
\volume(P) \leqslant \pi \sum_{n \geq 3} \ln \left( \frac{n}{2} \right) p_n - 4v_{tet}. \label{eqn6}
\end{equation}
\end{corollary}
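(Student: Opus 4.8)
The plan is to feed the face-counting bound (\ref{eqn5}) of Lemma~\ref{lemma4.1} into the asymptotic estimate for regular ideal bipyramids recalled just above. Before Lemma~\ref{lemma4.1} we computed $\volume(B_n^r) = 2n\Lambda(\pi/n)$, so each summand on the right-hand side of (\ref{eqn5}) can be rewritten as
$$
\Lambda\left(\frac{\pi}{n}\right) p_n n = \frac{1}{2}\, p_n\, \volume(B_n^r).
$$

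First I would apply the inequality $\volume(B_n^r) \leqslant 2\pi\ln(n/2)$, valid for all $n\geq 3$ by~\cite[Theorem~2.2]{Ad17-1}. Since every $p_n$ is a nonnegative integer and only finitely many of them are nonzero (the polyhedron $P$ has finitely many faces), the term-by-term replacement is legitimate and gives
$$
\sum_{n\geq 3}\Lambda\left(\frac{\pi}{n}\right) p_n n = \frac{1}{2}\sum_{n\geq 3} p_n\, \volume(B_n^r) \leqslant \frac{1}{2}\sum_{n\geq 3} p_n \cdot 2\pi\ln\left(\frac{n}{2}\right) = \pi\sum_{n\geq 3}\ln\left(\frac{n}{2}\right) p_n.
$$
Substituting this into (\ref{eqn5}), and carrying the correction term $-4v_{tet}$ through unchanged, yields precisely the bound (\ref{eqn6}).

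I do not expect any genuine obstacle here: the analytic content is already contained in Lemma~\ref{lemma4.1} and in Adams' bound on $\volume(B_n^r)$, and what remains is bookkeeping. The two points to double-check are that the substitution for triangular faces ($n=3$) is indeed a (weak) increase, i.e. $3\Lambda(\pi/3)=v_{tet}\leqslant \pi\ln(3/2)$, which holds numerically so the resulting inequality is consistent; and that no hypothesis beyond ``$P$ is an ideal right-angled hyperbolic polyhedron'', already in force for Lemma~\ref{lemma4.1}, is needed.
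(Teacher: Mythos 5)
Your argument is exactly the paper's: the corollary is obtained by substituting Adams' bound $\volume(B_n^r)\leq 2\pi\ln(n/2)$ term by term into inequality~(\ref{eqn5}) of Lemma~\ref{lemma4.1}, keeping the $-4v_{tet}$ correction. The proof is correct, and your sanity check that $v_{tet}\leq\pi\ln(3/2)$ for the $n=3$ terms is a harmless extra.
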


\smallskip 

Let $P$ be an ideal right-angled hyperbolic polyhedron, and $p_n$, $n\geq 3$, denote the number of its $n$-gonal faces. From Euler's formula for polyhedra and from the quadrivalence of the vertices of the polyhedron $P$ follows, see for example~\cite{EV20-2}, that
$$
p_3 = 8 +\sum_{k\geq 5} (k-4) p_k.
$$
Hence $P$ has at least eight triangular faces. The following lemma gives the bounds for the volume of an ideal right-angled polyhedron when the information about the number of triangular faces is used.

\smallskip

\begin{lemma} \label{lemma4.2}
	Let $P$ be an ideal right-angled hyperbolic polyhedron with $V$ vertices and $p_3$ triangular faces. Then
	\begin{itemize}
		\item[(a)] The following inequality holds: 
		$$
		\volume(P) \leqslant 2 v_{tet} \left(V - \frac{p_3 +8}{4}\right).
		$$
		\item[(b)] If $V>24$, then
		$$
		\volume(P) \leqslant 2 v_{tet} \left( V - \frac{p_3+13}{4} \right).
		$$
	\end{itemize}
\end{lemma}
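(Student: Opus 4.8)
The plan is to deduce both estimates from Lemma~\ref{lemma4.1} by comparing each weight $n\Lambda(\pi/n)$ with the linear function $\tfrac12 v_{tet}\,n$. The elementary input is that $\Lambda$ attains its maximum on $[0,\pi]$ at $\pi/6$, with $\Lambda(\pi/6)=\tfrac32\Lambda(\pi/3)=\tfrac12 v_{tet}$; this follows from $\Lambda'(\theta)=-\log|2\sin\theta|$ together with the duplication identity $\Lambda(2\theta)=2\Lambda(\theta)+2\Lambda(\theta+\tfrac\pi2)$ recalled above. Hence $\Lambda(\pi/n)\le\tfrac12 v_{tet}$ for every $n\ge3$, with equality exactly when $n=6$. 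Put $\sigma_n:=n\bigl(\tfrac12 v_{tet}-\Lambda(\pi/n)\bigr)\ge0$, so $\sigma_3=\tfrac12 v_{tet}$ and $\sigma_6=0$. Since every vertex of $P$ is $4$-valent, $\sum_{n\ge3}n\,p_n=2E=4V$, and Lemma~\ref{lemma4.1} rewrites as
$$
\volume(P)\ \le\ \tfrac12 v_{tet}\sum_{n\ge3}n\,p_n-\sum_{n\ge3}\sigma_n p_n-4v_{tet}\ =\ 2v_{tet}V-4v_{tet}-\tfrac12 v_{tet}\,p_3-\sum_{n\ge4}\sigma_n p_n .
$$
Dropping the nonnegative tail $\sum_{n\ge4}\sigma_n p_n$ and rearranging yields (a).

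For (b) one must improve the additive constant $4v_{tet}$ to $\tfrac{13}{2}v_{tet}$, that is, recover an extra $\tfrac52 v_{tet}$. The $-4v_{tet}$ in Lemma~\ref{lemma4.1} is the combined contribution of the four pyramids at the chosen apex $v$, namely $\sum_{f\ni v}\tfrac12\volume(B_{d_f}^r)=\sum_{f\ni v}d_f\Lambda(\pi/d_f)$, which was estimated crudely by $4v_{tet}$ using that each summand is at least $3\Lambda(\pi/3)=v_{tet}$. Since $d\Lambda(\pi/d)$ is increasing in $d$, one does better by choosing $v$ to meet larger faces; a short check of small configurations shows that $\sum_{f\ni v}d_f\Lambda(\pi/d_f)\ge\tfrac{13}{2}v_{tet}$ unless the multiset of face-sizes at $v$ is one of a handful of ``small'' types (for instance $\{3,3,3,3\},\{3,3,3,7\},\{3,3,4,5\},\{3,4,4,4\}$ and a few others). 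Outside that list (b) follows directly from Lemma~\ref{lemma4.1} with this apex together with the $\sigma$-bookkeeping above, so it remains to treat polyhedra all of whose vertex links are of those small types. For such $P$ every face has size $\le7$, and a double count of triangular/non-triangular face incidences combined with $F=V+2$ and $\sum_{n\ge3}n\,p_n=4V$ (equivalently $p_3=8+\sum_{k\ge5}(k-4)p_k$) pins $p_3$ down relative to $V$: either $p_3$ is small enough that the sharp vertex bound (\ref{eqn4}), available because $V>24$, already gives $\volume(P)\le\tfrac12 v_{oct}V-3v_{oct}\le 2v_{tet}\bigl(V-\tfrac{p_3+13}{4}\bigr)$ — here one uses $2v_{tet}>\tfrac12 v_{oct}$ — or the handful of non-triangular faces are forced to be so large that they contradict ``all faces of size $\le7$'' once $V>24$.

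The step I expect to be the main obstacle is precisely this last, combinatorial, piece of bookkeeping. The difficulty is concentrated in hexagonal faces: since $\sigma_6=0$, hexagons enlarge $p_3$ through $p_3=8+\sum_{k\ge5}(k-4)p_k$ without creating any slack in the inequality behind (a), and on their own they also fall outside the range in which (\ref{eqn4}) yields (b); so one cannot avoid both the sharper choice of apex and the structural count. Carrying this out — verifying that the list of ``bad'' vertex links is complete, that a polyhedron all of whose links are bad has only small faces, and that, together with $V>24$, this always produces either enough slack or a value of $p_3$ for which (\ref{eqn4}) closes the gap — is where the real work of the proof lies.
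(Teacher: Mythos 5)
Your part (a) is correct and is essentially the paper's argument in different clothing: writing $n\Lambda(\pi/n)=\tfrac12 v_{tet}\,n-\sigma_n$ with $\sigma_n\ge0$, $\sigma_3=\tfrac12 v_{tet}$, and using $\sum_n n\,p_n=2E=4V$ turns Lemma~\ref{lemma4.1} into exactly the stated bound. (The paper instead splits each $n$-bipyramid into ideal tetrahedra of volume at most $v_{tet}$ --- two for a triangle, $n$ for an $n$-gon with $n\ge4$ --- which assigns the same budget of $\tfrac12 v_{tet}\,n$ per non-triangular face and $v_{tet}$ per triangle; the two accountings coincide.)

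Part (b), however, has a genuine gap. The entire content of (b) is the existence, when $V>24$, of an apex whose four incident faces carry a combined budget of at least $\tfrac{13}{2}v_{tet}$, and you do not establish this. The paper gets it from \cite[Lemma~2.1]{ABEV}: for $V>24$ there is a vertex $v_0$ quasi-adjacent to at least four vertices, hence incident to four faces with at least $4+8+4=16$ sides in total; minimizing the tetrahedron budget over quadruples $(d_1,\dots,d_4)$ with $\sum d_i\ge16$ gives $(3,3,3,7)$, i.e.\ $2+2+2+7=13$ degenerate tetrahedra, i.e.\ $\tfrac{13}{2}v_{tet}$. Your proposed substitute --- classify the ``bad'' vertex links and then dispose of polyhedra all of whose links are bad by combinatorial bookkeeping plus (\ref{eqn4}) --- is only a plan. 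First, your list of bad types is not the right one: once you credit the $\sigma$-slack of the degenerate faces themselves (which your own accounting permits), the types failing to reach $\tfrac{13}{2}v_{tet}$ are exactly $(3,3,3,3)$, $(3,3,3,4)$, $(3,3,3,5)$, $(3,3,3,6)$, $(3,3,4,4)$; your examples $(3,3,3,7)$ and $(3,3,4,5)$ reach exactly $\tfrac{13}{2}v_{tet}$ after adding $\sigma_7$, resp.\ $\sigma_4+\sigma_5$, and $(3,4,4,4)$ exceeds it. Second, and decisively, the structural step --- that a polyhedron with $V>24$ cannot have every vertex link of a bad type, or else produces enough slack --- is never carried out; as you observe, hexagons ($\sigma_6=0$) block the cheap exits, and while special cases collapse (every link equal to $(3,3,3,6)$ forces $V=12$ by Euler's formula), excluding all mixtures of the five bad types for $V>24$ is a real combinatorial assertion, essentially the quasi-adjacency lemma of \cite{ABEV}, and it is precisely the step you leave undone.
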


\begin{proof}
	(a) Let $F$ be the number of faces of the polyhedron $P$ and denote faces by $f_1, \ldots, f_F$.
	Similar to the proof of Lemma~\ref{lemma4.1}, we consider the decomposition of the polyhedron $P$ into ideal pyramids $\tau_i$, $i=1, \ldots, F$,  such that the face $f_i$ is the base of $\tau_i$ and all pyramids have a common apex $v$. For each ideal pyramid $\tau_i$, consider its doubling,  the ideal bipyramid $\beta_i$. Hence,
	$$
	\operatorname{vol} (P) = \frac{1}{2} \sum_{i=1}^{F} \operatorname{vol} (\beta_i).
	$$
	
	Let for certainty $\tau_i$ be pyramid for some $n\geq 3$. Then $\beta_i$ is an $n$-gonal bipyramid. Let us split $\beta_i$ into ideal tetrahedra. If $n=3$, then the pyramid $\tau_i$ is a tetrahedron and $\beta_i$ is the union of two ideal tetrahedra along a common face, whence $\operatorname{vol} (\beta_i) \leq 2 v_{tet}$. If $n\geq 4$, then $\beta_i$ is splittable into $n$ ideal tetrahedra having a common edge that contains the apex $v$ and its double $v'$, see the example for $n=4$ shown in Figure ~\ref{fig1}. Thus, the volume of the $n$-gonal bipyramide is bounded by $2v_{tet}$ if $n=3$, and by $n v_{tet}$ if $n\geq 4$.
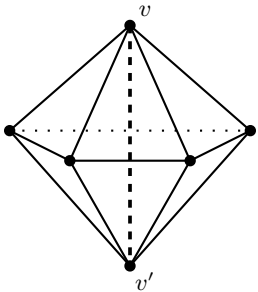
\begin{figure}[ht]
\begin{center}
\unitlength=.1mm
\begin{tikzpicture}[scale=0.4] 
%
\draw [line width=2pt, black] (0,9) circle[radius=0.1cm];
\draw [line width=2pt, black] (0,1) circle[radius=0.1cm];
\draw [line width=2pt, black] (-2,4.5) circle[radius=0.1cm];
\draw [line width=2pt, black] (2,4.5) circle[radius=0.1cm];
\draw [line width=2pt, black] (4,5.5) circle[radius=0.1cm];
\draw [line width=2pt, black] (-4,5.5) circle[radius=0.1cm];
\draw [line width=1.4pt, dashed, black] (0,9) -- (0,1);
\draw [thick, black] (0,9)-- (-4,5.5) -- (0,1);
\draw [thick, black] (0,9)-- (-2,4.5) -- (0,1);
\draw [thick, black] (0,9)-- (4,5.5) -- (0,1);
\draw [thick, black] (0,9)-- (2,4.5) -- (0,1);
\draw [thick, black] (-4,5.5)-- (-2,4.5) -- (2,4.5) -- (4,5.5);
\draw [thick, loosely dotted, black] (-4,5.5) -- (4,5.5);
\node (K) at (0.5,9.5) {\small $v$};
\node (K) at (0.5,0.5) {\small $v'$};
\end{tikzpicture}
\end{center}
\caption{Splitting an ideal 4-bipyramid into 4 ideal tetrahedra.} \label{fig1}
\end{figure}

Denote by $E$ the number of edges of the polyhedron $P$. Since all dihedral angles of the polyhedron $P$ are equal to $\pi/2$, each edge $e\in E$ is incident to two bipyramids. Thus, the sum of incident edges over all bipyramids will be equal to twice the number of edges $2E$ (each edge incident to the base of the pyramid in the splitting of $P$ is counted twice). Since the sum of incident edges on triangular bipyramids is equal to $3 p_3$, the sum of incident edges corresponding to $n$-gonal bipyramids, $n\geq 4$, is equal to $2E - 3 p_3$. Thus,
\begin{equation}
2 \operatorname{vol} (P) \leq 2 v_{tet} \cdot p_3 + v_{tet} \cdot (2E - 3p_3) = v_{tet} \cdot (4V - p_3), \label{eqn7}
\end{equation}
where we used $E = 2 V$ since each vertex of $P$ is quadrivalent.

In the bound (\ref{eqn7}), we did not take into account that the apex $v$ is incident to four faces and so, four pyramids, and therefore four bipyramids degenerate into flat ones. The contribution of four bipyramids in (\ref{eqn7}) is at least $4\cdot (2 v_{tet})$, that  is the case when all four bipyramids are $3$-bipyramids. Therefore, 
$$
2 \operatorname{vol} (P) \leq v_{tet} \cdot (4V - p_3 - 8),
$$
and so  
$$
\operatorname{vol}(P) \leq 2 v_{tet} \cdot \left(V - \frac{p_3 + 8}{4}\right).
$$

(b) Let us choose the common apex of the pyramids in a special way. Obviously, for each vertex of $P$ there are four adjacent vertices. Following~\cite{ABEV}, we will say that two vertices are \emph{quasi-adjacent} if they not adjacent, but  belong to the same face.  According to~\cite[Lemma~2.1]{ABEV}, if ideal right-angled polyhedron has $V>24$ vertices, there is a vertex $v_0$ that is quasi-adjacent to at least four vertices. Since $v_0$ has 4 adjacent vertices, we get that $v_0$ is adjacent to four faces such that the sum of their sides is at least $16$. Taking a splitting of $P$  into pyramids with a common apex $v_0$ we will get that at least $13$ tetrahedra will degenerate. The bound holds by the same arguments as in the item (a).
\end{proof}

\smallskip 

Now we are ready to present the volume bounds which improve Theorem~\ref{theoremPolyhedraBounds} in the case when there is the additional information about numbers of trivalent vertices and triangular faces. 

\smallskip

\begin{theorem} \label{theoremTrianglesBounds}
	Let $\Gamma$ be a 3-connected planar graph with $E$ edges, and $P$ be a generalized hyperbolic polyhedron for which $\Gamma$ is the 1-skeleton.
	\begin{itemize}
		\item[(a)] If $P$ has $V_3$ trivalent vertices and $p_3$ triangular faces, then
		$$
		\volume(P) \leqslant 2v_{tet} \cdot \left(E - \frac{p_3+V_3+8}{4} \right).
		$$
		\item[(b)] If all vertices of $P$ are trivalent and there are $p_3$ triangular faces, then
		$$
		\volume(P) \leqslant \frac{5v_{tet}}{3} \left( E - \frac{3 p_3 + 24}{10} \right).
		$$
	\end{itemize}
\end{theorem}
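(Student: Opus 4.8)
The plan is to combine Belletti's theorem with Lemma~\ref{lemma4.2}, passing through the medial graph. By Theorem~\ref{theoremBeletti} (reducing to proper polyhedra exactly as in the proof of Theorem~\ref{theoremPolyhedraBounds}) we have $\volume(P)\le\volume(\overline{\Gamma})=\volume(\overline{\Gamma}_{tr})$, where $\overline{\Gamma}_{tr}$ is an ideal right-angled hyperbolic polyhedron whose $1$-skeleton is the medial graph $M(\Gamma)$. It therefore suffices to express, in terms of $\Gamma$, the two quantities entering Lemma~\ref{lemma4.2}(a): the number of vertices and the number of triangular faces of $\overline{\Gamma}_{tr}$.

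First I would carry out the medial-graph bookkeeping. Each vertex of $M(\Gamma)$ corresponds to an edge of $\Gamma$, so $\overline{\Gamma}_{tr}$ has exactly $E$ vertices; this is consistent with $4$-valence, since the $2E$ edge-ends of $\Gamma$ become $2E$ edges of $M(\Gamma)$. For faces, recall that the faces of a medial graph split into two colour classes: one in bijection with the vertices of $\Gamma$, a vertex of degree $d$ giving a $d$-gonal face, and one in bijection with the faces of $\Gamma$, a $k$-gonal face giving a $k$-gonal face. Hence the triangular faces of $\overline{\Gamma}_{tr}$ are precisely the $V_3$ coming from trivalent vertices of $P$ together with the $p_3$ coming from triangular faces of $P$, so there are $V_3+p_3$ of them. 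Feeding $V=E$ and (triangular face count) $=V_3+p_3$ into Lemma~\ref{lemma4.2}(a) gives
$$
\volume(P)\le \volume(\overline{\Gamma}_{tr})\le 2v_{tet}\Bigl(E-\frac{V_3+p_3+8}{4}\Bigr),
$$
which is part~(a).

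For part~(b), specialize to the case where every vertex of $P$ is trivalent. Then $V_3=V$, where $V$ is the number of vertices of $P$, and the handshake identity $3V=2E$ for a trivalent graph gives $V=\tfrac{2}{3}E$. Substituting $V_3=\tfrac{2}{3}E$ into the bound of part~(a) and simplifying, the right-hand side becomes $\tfrac{5}{3}v_{tet}E-\tfrac{1}{2}v_{tet}p_3-4v_{tet}$, and a direct check shows this coincides with $\tfrac{5v_{tet}}{3}\bigl(E-\tfrac{3p_3+24}{10}\bigr)$; this is part~(b).

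No step is a genuine obstacle: once Theorem~\ref{theoremBeletti} and Lemma~\ref{lemma4.2} are in hand, the argument is an assembly of those with Euler's formula. The only place demanding care is the second paragraph — verifying that $M(\Gamma)$ is an honest $4$-valent polyhedral graph, so that $\overline{\Gamma}_{tr}$ is a genuine ideal right-angled polyhedron to which Lemma~\ref{lemma4.2} may be applied, and accounting for its triangular faces correctly — after which part~(b) is a purely arithmetic corollary of part~(a).
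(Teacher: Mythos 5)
Your proof is correct and follows essentially the same route as the paper: Belletti's theorem reduces the problem to the ideal right-angled polyhedron $\overline{\Gamma}_{tr}$, the medial-graph bookkeeping gives $E$ vertices and $V_3+p_3$ triangular faces, and Lemma~\ref{lemma4.2}(a) together with $2E=3V$ yields both parts. (The paper's own proof cites Lemma~\ref{lemma4.1} at this step, but the bound actually being applied is that of Lemma~\ref{lemma4.2}(a), exactly as you invoke it.)
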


\begin{proof}
	(a) We use notations $V$, $E$ and $F$ for the number of vertices, edges and faces of the graph $\Gamma$, and similarly, $\overline{V}$, $\overline{E}$ and $\overline{F}$ for the number of vertices, edges and the faces of the 1-skeleton of the polyhedron $\overline{\Gamma}_{tr}$. Since the 1-skeleton of  $\overline{\Gamma}_{tr}$ is the medial graph for $\Gamma$, we have $\overline{V}= E$, $\overline{E}= 2\overline{V}= 2 E$ and $\overline{F} = V + F$. If a face of $\overline{\Gamma}_{tr}$ corresponds to a vertex of $\Gamma$, then the number of its sides is equal to the valence of the vertex. If a face of $\overline{\Gamma}_{tr}$ corresponds to a face of $\Gamma$, then the number of its sides is equal to the number of sides in the original face. Therefore, $\overline{\Gamma}_{tr}$ has $V_3+p_3$ triangular faces. Applying Lemma~\ref{lemma4.1} to the polyhedron $\overline{\Gamma}_{tr}$, we obtain the required inequality.
	
	(b) If all vertices of $\Gamma$ are trivalent, then $2 E = 3 V = 3 V_3$. By substituting $V_3 = \frac{2}{3} E$ into the estimate from the item (a), we get:
	$$
	\volume{P} \leq 2 v_{tet} \cdot \left(E - \frac{p_3+V + 8}{4} \right) = \frac{5 v_{tet}}{3} \left(E - \frac{3p_3 + 24}{10} \right),
	$$
	that completes the proof.
\end{proof}

\smallskip

\begin{remark}
	{\rm
		To compare the bounds obtained in Theorems~\ref{theoremPolyhedraBounds} and~\ref{theoremTrianglesBounds}, we note that the inequality $\frac{5}{3} v_{tet} < \frac{1}{2} v_{oct}$ holds. Thus, if all the vertices of the polyhedron are trivalent, then the formulae from the Theorem~\ref{theoremTrianglesBounds} give the better asymptotics. Moreover, the inequality
		$$
		\frac{5 v_{tet}}{3} \left(E - \frac{3p_3 + 24}{10} \right) < \frac{v_{oct}}{2} \cdot E - 3 v_{oct}
		$$
		is equivalent to the inequality
		$$
		E + \frac{3 v_{tet}}{3v_{oct} - 10v_{tet} } p_3 > \frac{6 (3 v_{oct} - 4 v_{tet})}{3v_{oct} - 10 v_{tet}}.
		$$
		Using the approximate values $v_{tet} = 1.014941$ and $v_{oct} = 3.663863$, we obtain that for
		$$
		E + 3.615410 \cdot p_3 > 49.385163
		$$
		the bound from the Theorem~\ref{theoremTrianglesBounds} is stronger.
	}
\end{remark}
\section{Pyramids, prisms and two-apex pyramids} \label{sec4}

In this section we give some examples of calculating the upper bounds based on the above obtained formulas and compare them with the known volume values.

\smallskip

\textbf{5.1. Pyramids.}
Note that the medial graph for the $1$-skeleton of the $n$-gonal pyramid $P_n$ is the $1$-skeleton of the $n$-antiprism $A(n)$, see Figure~\ref{fig2} for $n=4$. Recall that the $n$-antiprism $A(n)$ is an ideal right-angled polyhedron with $2n$ $4$-valent vertices, $(2n+2)$ faces, upper and lower $n$-angular bases and a side surface of two layers of $n$ triangles.
\begin{figure}[ht]
\begin{center}
\unitlength=.1mm
\begin{tikzpicture}[scale=0.4] 
\unitlength=10.mm
\draw [line width=2pt, black] (-5,2) circle[radius=0.1cm];
\draw [line width=2pt, black] (-11,2) circle[radius=0.1cm];
\draw [line width=2pt, black] (-5,8) circle[radius=0.1cm];
\draw [line width=2pt, black] (-11,8) circle[radius=0.1cm]; 
\draw [line width=2pt, black] (-8,5) circle[radius=0.1cm]; 
\draw [thick, black] (-11,2)-- (-11,8) -- (-5,8) -- (-5,2) -- (-11,2);
\draw [thick, black,->] (-11,2)-- (-5,8);
\draw [thick, black,->] (-11,8)-- (-5,2);
\draw [line width=2pt, black] (4,5) circle[radius=0.1cm];
\draw [line width=2pt, black] (12,5) circle[radius=0.1cm];
\draw [line width=2pt, black] (8,1) circle[radius=0.1cm];
\draw [line width=2pt, black] (8,9) circle[radius=0.1cm];
\draw [thick, black] (4,5)-- (8,9) -- (12,5) -- (8,1) -- (4,5);
\draw [line width=2pt, black] (7,4) circle[radius=0.1cm];
\draw [line width=2pt, black] (9,4) circle[radius=0.1cm];
\draw [line width=2pt, black] (7,6) circle[radius=0.1cm];
\draw [line width=2pt, black] (9,6) circle[radius=0.1cm];
\draw [thick, black] (7,4)-- (7,6) -- (9,6) -- (9,4) -- (7,4);
\draw [thick, black] (7,4)-- (4,5) -- (7,6) -- (8,9) --(9,6) -- (12,5) -- (9,4) -- (8,1) -- (7,4);
\end{tikzpicture}
\end{center}
\caption{Pyramid $P_4$ and antiprism $A_4$.} \label{fig2}
\end{figure}
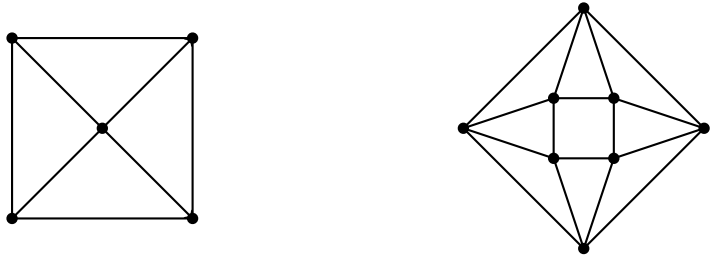

Let $n=3$. The triangular pyramid $P_3$ is a tetrahedron with $E=6$ edges, $p_3= 4$ triangular faces, and $V_3= 4$ trivalent vertices.
From the case (a) of the Theorem~\ref{theoremTrianglesBounds} we get
$$
\operatorname{vol} (P_3) < 2 \, v_{tet} \, \left( 6 - \frac{4+4+8}{4}\right) = 4\, v_{tet}.
$$
Recall that the formula for the volume of a generalized hyperbolic tetrahedron was given in~\cite{Us06}. It is well known that the medial graph for the 1-skeleton of a tetrahedron is the 1-skeleton of an octahedron. The volume of an ideal right-angled octahedron is $v_{oct} = 3.663863$. Thus, the volume of any generalized tetrahedron does not exceed $v_{oct}$. This fact was noted in~\cite{Us06}. Since $v_{oct} <4\, v_{tet}$, the estimate obtained from the Theorem~\ref{theoremTrianglesBounds} is correct, although it is not accurate.

For $n\geq 4$, the pyramid $P_n$ has $E = 2n$ edges, $p_3= n$ triangular faces, and $V_3= n$ trivalent vertices.
From the case (a) of the Theorem~\ref{theoremTrianglesBounds} we get
$$
\operatorname{vol} (P_n) \leq 2 \, v_{tet} \left( 2n - \frac{2n+8}{4} \right) = 3 v_{tet} \cdot n - 4 v_{tet}.
$$
Note that approximately $3 v_{tet} = 3.044823$.

Recall that the formula for the volume of an ideal right-angled hyperbolic antiprism $A(n)$, $n\geq 3$, is known. It was obtained by Thurston in~\cite{Th80}:
$$
\volume(A(n)) = 2 n \, \left[ \Lambda \left( \frac{\pi}{4} + \frac{\pi}{2n} \right) + \Lambda \left( \frac{\pi}{4} - \frac{\pi}{2n} \right) \right].
$$

Thus, for $n\geq 4$, the volume of the generalized hyperbolic $n$-pyramid $P_n$ satisfies the following inequality:
$$
\operatorname{vol} (P_n) \leq 2 n \, \left[ \Lambda \left( \frac{\pi}{4} + \frac{\pi}{2n} \right) + \Lambda \left( \frac{\pi}{4} - \frac{\pi}{2n} \right) \right] .
$$
The right side of the inequality is asymptotically equivalent to $\frac{1}{2} v_{oct}\cdot n$ as $n\to\infty$.

\smallskip

\textbf{5.2. Prisms.} Denote by $\Pi_n$ a generalized hyperbolic $n$-gonal prism for $n\geq 3$, that is, a polyhedron having upper and lower $n$-gonal bases and $n$ quadrangular faces on the lateral surface. The vertices of a polyhedron can be finite, ideal, or hyperideal, and the dihedral angles are such that the polyhedron can be realized in $\mathbb H^3$.

Let $n=3$. The prism $\Pi_3$ is a triangular prism that has $E=9$ edges, $p_3= 2$ triangular faces, and $V_3=6$ trivalent vertices. From the case (a) of the Theorem~\ref{theoremTrianglesBounds} we get that
$$
\operatorname{vol} (\Pi_3) < 2 \, v_{tet} \cdot \left(9 - \frac{2 + 6 - 8}{4} \right) = 18 \, v_{tet}.
$$
At the same time, the rectification $\Pi_3$ is a polyhedron composed of two octahedra, whence $\operatorname{vol}(\Pi_3)\leq 2v_{oct}$. Since $2 v_{oct} <18\, v_{tet}$, the estimate obtained from the Theorem~\ref{theoremTrianglesBounds} is correct, although it is not accurate.

Let $n=4$. The prism $\Pi_4$ is a cube. It is easy to see that the medial graph for the 1-skeleton of a cube is the 1-skeleton of an ideal right-angled polyhedron $Q_{14}$ with $8$ triangular and $6$ quadrangular faces, shown in Figure~\ref{fig3}.
\begin{figure}[ht]
\begin{center}
\unitlength=.1mm
\begin{tikzpicture}[scale=0.4] 
\unitlength=10.mm
\draw [line width=2pt, black] (-5,2) circle[radius=0.1cm];
\draw [line width=2pt, black] (-11,2) circle[radius=0.1cm];
\draw [line width=2pt, black] (-5,8) circle[radius=0.1cm];
\draw [line width=2pt, black] (-11,8) circle[radius=0.1cm]; 
\draw [thick, black] (-11,2)-- (-11,8) -- (-5,8) -- (-5,2) -- (-11,2);
\draw [line width=2pt, black] (-9.5,3.5) circle[radius=0.1cm];
\draw [line width=2pt, black] (-9.5,6.5) circle[radius=0.1cm];
\draw [line width=2pt, black] (-6.5,6.5) circle[radius=0.1cm];
\draw [line width=2pt, black] (-6.5,3.5) circle[radius=0.1cm]; 
\draw [thick, black] (-9.5,3.5)-- (-9.5,6.5) -- (-6.5,6.5) -- (-6.5,3.5) -- (-9.5,3.5);
\draw [thick, black] (-11,2)-- (-9.5,3.5);
\draw [thick, black] (-11,8)-- (-9.5,6.5);
\draw [thick, black] (-5,8)-- (-6.5,6.5);
\draw [thick, black] (-5,2)-- (-6.5,3.5);
\draw [line width=2pt, black] (4,5) circle[radius=0.1cm];
\draw [line width=2pt, black] (12,5) circle[radius=0.1cm];
\draw [line width=2pt, black] (8,1) circle[radius=0.1cm];
\draw [line width=2pt, black] (8,9) circle[radius=0.1cm];
\draw [thick, black] (4,5)-- (8,9) -- (12,5) -- (8,1) -- (4,5);
\draw [line width=2pt, black] (6.5,3.5) circle[radius=0.1cm];
\draw [line width=2pt, black] (9.5,3.5) circle[radius=0.1cm];
\draw [line width=2pt, black] (6.5,6.5) circle[radius=0.1cm];
\draw [line width=2pt, black] (9.5,6.5) circle[radius=0.1cm];
\draw [thick, black] (6.5,3.5)-- (4,5) -- (6.5,6.5) -- (8,9) --(9.5,6.5) -- (12,5) -- (9.5,3.5) -- (8,1) -- (6.5,3.5);
\draw [line width=2pt, black] (7,5) circle[radius=0.1cm];
\draw [line width=2pt, black] (9,5) circle[radius=0.1cm];
\draw [line width=2pt, black] (8,4) circle[radius=0.1cm];
\draw [line width=2pt, black] (8,6) circle[radius=0.1cm];
\draw [thick, black] (6.5,3.5)-- (7,5) -- (6.5,6.5) -- (8,6) -- (9.5,6.5) -- (9,5) -- (9.5,3.5) -- (8,4) -- (6.5,3.5);
\draw [thick, black] (7,5) -- (8,6) -- (9,5) -- (8,4) -- (7,5);
\end{tikzpicture}
\end{center}
\caption{Prism $\Pi_4$ and polyhedron $Q_{14}$.}\label{fig3}
\end{figure}
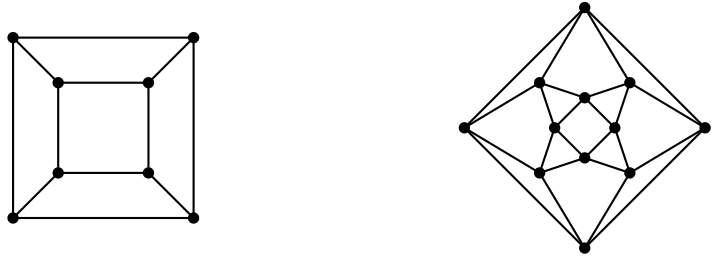
Its volume is calculated in~\cite{EV20-1} and is approximately equal to $12.046092$. This polyhedron is the union of two copies of the ideal antiprism $A(4)$ along a quadrangular face. Note also that this polyhedron has the maximum volume among all nine ideal right-angled hyperbolic polyhedra with $14$ faces.

Let us have a unified discussion of the case $n\geq 4$.
In~\cite[Corollary~11]{At11} the following inequality for the volume of the prism $\Pi_n$ was obtained:
\begin{equation}
\volume(\Pi_n) < \frac{3}{2} \, v_{oct} \cdot n - 2 \, v_{oct}. \label{eqn8}
\end{equation}
Note that $\frac{3}{2} v_{oct} = 5.495794$.

Since all the vertices of the prism $\Pi_n$ are trivalent and its 1-skeleton has $E = 3n$ edges, from the case (b) of the Theorem~\ref{theoremTrianglesBounds} we obtain
\begin{equation}
\volume(\Pi_n) < 5 \, v_{tet} \cdot n - 4 \, v_{tet}. \label{eqn9}
\end{equation}
Note that $5 v_{tet} = 5.074705$.

\smallskip
\begin{remark} {\rm
		Inequality
		$$
		5 \, v_{tet} \cdot n - 4 \, v_{tet} < \frac{3}{2} \, v_{oct} \cdot n - 2 \, v_{oct}
		$$
		occurs when
		$$
		n > \frac{2 v_{oct} - 4 v_{tet}}{ \frac{3}{2} v_{oct} - 5 v_{tet}}.
		$$
		Substituting the specified constants by their approximate values, we get that the bound (\ref{eqn9}) improves the bound (\ref{eqn8}) for $n > 7.760616$, that is, for $n\geq 8$.
	}
\end{remark}

The volume formula of an ideal right-angled hyperbolic $n$-antiprism $A(n)$ was obtained in~\cite{Th80}. Using this formula, we get
$$
\volume(\Pi_n) < 2 \volume (A(n)) = 4n \left[ \Lambda \left( \frac{\pi}{4} + \frac{\pi}{2n} \right) + \Lambda \left( \frac{\pi}{4} - \frac{\pi}{2n} \right) \right].
$$
At the same time, $\volume(A(n))$ is asymptotically equivalent to $\frac{1}{2} v_{oct}\cdot n$ for $n\to\infty$.

\smallskip

\textbf{5.3. Two-apex pyramids.}
Consider the polyhedron $W_n$, $n\geq 4$, which is obtained from the pyramid $P_n$ as follows. Split the apex of the pyramid $P_n$, replacing it with two new ones and connecting them with an edge. Then we connect one of the new apexes to two adjacent vertices of the base, and connect the another new apexes to the remaining $(n-2)$ vertices of the base. As a result, the base of $W_n$ is still a $n$-gon, and its side surface consists of two quadrilaterals and $(n-2)$ triangles, as shown in Figure~\ref{fig4} for $n=6$. The polyhedron $W_n$ will be called a \emph{two-apex pyramid}. Obviously, $W_4$ is a triangular prism $\Pi_3$. In this sense, the family of two-apex pyramids $W_n$ can be considered as a generalization of the families of pyramids and prisms discussed above.
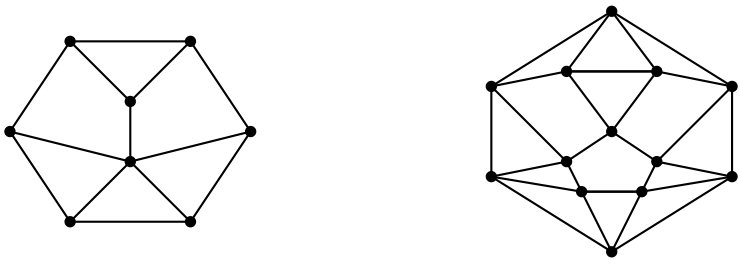
\begin{figure}[ht]
\begin{center}
\unitlength=.1mm
\begin{tikzpicture}[scale=0.4] 
\draw [line width=2pt, black] (-10,2) circle[radius=0.1cm];
\draw [line width=2pt, black] (-12,5) circle[radius=0.1cm];
\draw [line width=2pt, black] (-10,8) circle[radius=0.1cm];
\draw [line width=2pt, black] (-6,8) circle[radius=0.1cm]; 
\draw [line width=2pt, black] (-4,5) circle[radius=0.1cm]; 
\draw [line width=2pt, black] (-6,2) circle[radius=0.1cm];
\draw [line width=2pt, black] (-8,6) circle[radius=0.1cm];
\draw [line width=2pt, black] (-8,4) circle[radius=0.1cm];
\draw [thick, black] (-10,2)-- (-12,5) -- (-10,8) -- (-6,8) -- (-4,5) -- (-6,2) -- (-10,2);
\draw [thick, black] (-8,6)-- (-10,8);
\draw [thick, black] (-8,6)-- (-6,8);
\draw [thick, black] (-8,6)-- (-8,4);
\draw [thick, black] (-8,4)-- (-12,5);
\draw [thick, black] (-8,4)-- (-10,2);
\draw [thick, black] (-8,4)-- (-6,2);
\draw [thick, black] (-8,4)-- (-4,5);
\draw [line width=2pt, black] (8,5) circle[radius=0.1cm];
\draw [line width=2pt, black] (6.5,7) circle[radius=0.1cm];
\draw [line width=2pt, black] (9.5,7) circle[radius=0.1cm];
\draw [line width=2pt, black] (8,9) circle[radius=0.1cm];
\draw [line width=2pt, black] (8,1) circle[radius=0.1cm];
\draw [line width=2pt, black] (6.5,4) circle[radius=0.1cm];
\draw [line width=2pt, black] (9.5,4) circle[radius=0.1cm];
\draw [line width=2pt, black] (7,3) circle[radius=0.1cm];
\draw [line width=2pt, black] (9,3) circle[radius=0.1cm];
\draw [thick, black] (7,3)-- (6.5,4) -- (8,5) -- (9.5,4) -- (9,3) -- (7,3);
\draw [thick, black] (8,5) -- (6.5,7)-- (9.5,7) -- (8,5);
\draw [thick, black] (8,9) -- (6.5,7)-- (9.5,7) -- (8,9);
\draw [thick, black] (8,1) -- (7,3)-- (9,3) -- (8,1);
\draw [line width=2pt, black] (12,6.5) circle[radius=0.1cm];
\draw [line width=2pt, black] (4,6.5) circle[radius=0.1cm];
\draw [line width=2pt, black] (12,3.5) circle[radius=0.1cm];
\draw [line width=2pt, black] (4,3.5) circle[radius=0.1cm];
\draw [thick, black] (8,1)-- (4,3.5) -- (4,6.5) -- (8,9) -- (12,6.5) -- (12,3.5) -- (8,1);
\draw [thick, black] (9.5,7) -- (12,6.5);
\draw [thick, black] (9.5,4) -- (12,6.5);
\draw [thick, black] (9.5,4) -- (12,3.5);
\draw [thick, black] (9,3) -- (12,3.5);
\draw [thick, black] (6.5,7) -- (4,6.5);
\draw [thick, black] (6.5,4) -- (4,6.5);
\draw [thick, black] (6.5,4) -- (4,3.5);
\draw [thick, black] (7,3) -- (4,3.5);
\end{tikzpicture}
\end{center}
\caption{Two-apex pyramid $W_6$ and twisted antiprism $A(6)^*$.} \label{fig4}
\end{figure}

The medial graph for the 1-skeleton of the two-vertex pyramid $W_n$ is the 1-skeleton of the polyhedron $A(n)^*$, which was introduced in~\cite{EV20-1} and called a \emph{twisted antiprism}, see Figure~\ref{fig4} for $n=6$. For $n\geq 5$, a two-apex pyramid $W_n$ has $E = 2n+1$ edges, $V_3= n+1$ trivalent vertices, and $p_3 = n-2$ triangular faces. From the case (a) of the Theorem~\ref{theoremTrianglesBounds} we obtain the following bound: 
$$
\operatorname{vol} (W_n) \leq 2 v_{tet} \left( 2n+1 - \frac{(n-2) + (n+1) + 8}{4} \right) = 3 v_{tet} \cdot n - \frac{3}{2} v_{tet}.
$$
Since the rectification of the polyhedron $W_n$ is the polyhedron $A(n)^*$, 
$$
\operatorname{vol} (W_n) \leq \operatorname{vol} (A(n)^*)
$$
As shown in~\cite{EV20-1}, the volume of the twisted antiprism can be calculated via the volume of the antiprism the following way:
$$
\operatorname{vol}(A(n)^*) = \operatorname{vol} (A(n-1)) + \operatorname{A(3)},
$$
and $\volume(A(n)^*)$ is asymptotically equivalent to $\frac{1}{2} v_{oct}\cdot n$ as $n\to\infty$.
	
\section{Volume bound for links via the number of twists} \label{sec5}

By the \emph{volume of a hyperbolic knot or link} $K\subset S^3$ we mean the volume of a hyperbolic manifold $S^3\setminus K$.
In this Section we will establish new upper bounds for the volumes of knots and links via the combinatorial parameters of its diagram.

First of all, we recall the known bounds and illustrate these bounds for a two-bridge knot ${\bf b} (\frac{55}{17})$ as an example. 
A diagram of ${\bf b} (\frac{55}{17})$ is presented in Figure~\ref{fig5}. This figure corresponds to a continued fraction $\frac{55}{17} = 3 + \frac{1}{4+\frac{1}{4}}$ and is known as a \emph{Conway's normal form} for two-bridge knots and links~\cite{Ka96, Ro76}. Calculating the volume by the computer program \emph{SnapPy}~\cite{Snap}, we get $\operatorname{vol} (S^3\setminus {\bf b} (\frac{55}{17})) = 10.117141$.

\begin{figure}[h]
	\begin{center}
		\scalebox{0.8}{
			\begin{tikzpicture}
			\pic[
			rotate=90,
			braid/.cd,
			every strand/.style={ultra thick},
			strand 1/.style={black},
			strand 2/.style={black},
			strand 3/.style={black},
			strand 4/.style={black},]
			at (0,0) {braid={s_2^{-1} s_2^{-1} s_2^{-1} s_1 s_1 s_1 s_1 s_2^{-1} s_2^{-1} s_2^{-1} s_2^{-1}}};
			\draw [ultra thick, black] (0,3) -- (11.5,3);
			\draw[ultra thick, black] (0,1) arc (90:270:0.5);
			\draw[ultra thick, black] (0,3) arc (90:270:0.5);
			\draw[ultra thick, black] (11.5,1) arc (90:-90:0.5);
			\draw[ultra thick, black] (11.5,3) arc (90:-90:0.5);
			\end{tikzpicture}
		}
	\end{center}
	\caption{Diagram of the two-bridge knot ${\bf b}(\frac{55}{17})$.} \label{fig5}
\end{figure}
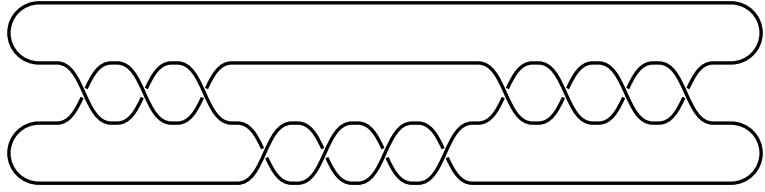

Apparently, the first known estimate of the volume of a hyperbolic knot $K$ in terms of the number of crossings $c(K)$ in its diagram was obtained in Adams dissertation~\cite{Ad83}. It was shown that if the knot $K$ is different from the figure-eight knot $4_1$, then
\begin{equation}
\volume (S^3 \setminus K) \leq v_{tet} \cdot (4 c(K) - 16). \label{eqn10}
\end{equation}
Recall that $\volume(S^3\setminus 4_1) = 2 v_{tet}$. Since $c ({\bf b}(\frac{55}{17})) =11$, the inequality (\ref{eqn10}) gives an estimate $\operatorname{vol} (S^3\setminus{\bf b} (\frac{55}{17})) \leq 28 \cdot v_{tet} = 28.418348$.

In~\cite{Ad13}, Adams improved the inequality~(\ref{eqn10}) as follows: if $c(K)\geq 5$, then
\begin{equation}
\volume{(S^3 \setminus K)} \leq v_{oct} \cdot (c(K) - 5) + 4 \cdot v_{tet}. \label{eqn11}
\end{equation}
From inequality~(\ref{eqn11}) we get $\operatorname{vol} (S^3\setminus {\bf b} (\frac{55}{17})) \leq 26.078932$.

The next family of bounds for the volumes of knots and links use the number of twists in a diagram. \emph{Twist} in the diagram $D$ of a knot or a link $K$ is the maximal chain of consecutive bigon regions, see Figure~\ref{fig200}.
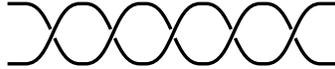
\begin{figure}[h]
	\begin{center}
		\scalebox{0.8}{
			\begin{tikzpicture}
			\pic[
			rotate=90,
			braid/.cd,
			every strand/.style={ultra thick},
			strand 1/.style={black},
			strand 2/.style={black},
			strand 3/.style={black},
			strand 4/.style={black},]
			at (0,0) {braid={s_1 s_1 s_1 s_1 s_1 }};
			\end{tikzpicture}
		}
	\end{center}
	\caption{Twist of the length five in the diagram.} \label{fig200}
\end{figure}
Equivalently, twist can be understood as a chain of several consecutive half-turns on two strands.
Moreover, all half-turns are directed in one direction: either positive or negative. The number of half-turns in the twist will be called \emph{twist length}. The number of twists in the diagram $D$ is denoted by $t(D)$. For example, for the diagram $D$ shown in Figure~\ref{fig5} we have  $t(D) = 3$.

In the appendix to Luckenby's paper~\cite{La04}, Agol and Thurston showed that the volume of any hyperbolic link $K$ can be estimated in terms of the number of twists $t(D)$ in its diagram $D$ as follows:
\begin{equation}
\volume{(S^3 \setminus K)} \leq 10 v_{tet} \cdot (t(D)-1). \label{eqn12}
\end{equation}
Moreover, this estimate is asymptotically sharp. By inequality~(\ref{eqn12}) we get the bound $\operatorname{vol} (S^3\setminus {\bf b} (\frac{55}{17})) \leq 20 \cdot v_{tet} = 20.29882$.


In~\cite{DT15} Dasbach and Tsvetkova used additional information on twists to improve the inequality obtained by  Agol and Thurston. For the diagram $D$ of the link $K$ denote by $t_i = t_i(D)$ the number of twists of length $i$ for $i\geq 1$. Note that $t(D) = \sum_{i\geq 1} t_i(D)$. Denote by $g_i = g_i(D)$ the number of twists of length at least $i$, $i \geq 1$. According to~\cite[Theorem~2.3]{DT15}, if $D$ is a reduced alternating diagram of hyperbolic alternating link $K$, then
\begin{equation}
\operatorname{vol} (S^3 \setminus K) \leq v_{tet} \cdot (4 t_1 + 6 t_2 + 8 t_3 + 10 g_4 - a), \label{eqnDT}
\end{equation}
where $a = 10$ if $g_4\neq 0$, $a=7$ if $t_3 \neq 0$, $a = 6$ otherwise. Later in~\cite{DT19} Dasbach and Tsvietkova proved that the bound~(\ref{eqnDT}) is also true for the case when the diagram is not alternating.

Adams~\cite[Theorem~3.1]{Ad17-1} improved the result obtained by Dasbach and Tsvietkova as follows.
Let $K$ be a hyperbolic link admitting a reduced alternating diagram $D$ with $c(D)\geq 5$ and $t(D)\geq 3$. Moreover, we assume that $K$ is not the Borromean rings $6^3_2$. Then
\begin{equation}
\volume{(S^3 \setminus K)} < t_1 \cdot v_{oct} + t_2 \cdot 6v_{tet} + t_3 \cdot 16 \Lambda \left(\frac{\pi}{8} \right) + t_4 \cdot 20 \Lambda \left(\frac{\pi}{10} \right) + g_5 \cdot 10v_{tet} - a, \label{eqn13}
\end{equation}
where
\begin{equation}
a = \begin{cases}
7 v_{oct} - 10 v_{tet}, & \text{ if } g_2 =0, \\
11 v_{tet}, & \text{ if } g_3 = 0\text{ and } t_2 \geq 1, \\
32 \Lambda\left(\frac{\pi}{8}\right) + 5 v_{tet} - v_{oct} - 14 \Lambda\left(\frac{\pi}{7} \right) , & \text{ if } g_4 = 0 \text{ and } t_3 \geq 1, \\
40 \Lambda\left(\frac{\pi}{10}\right) + 12 \Lambda\left(\frac{\pi}{6}\right) - 2 v_{tet} - 8 \Lambda\left(\frac{\pi}{4}\right) - 18 \Lambda \left(\frac{\pi}{9}\right), & \text{ if } g_5 = 0 \text{ and } t_4 \geq 1, \\
4 v_{tet} + 12\Lambda\left(\frac{\pi}{6}\right) + 60\Lambda\left(\frac{\pi}{10}\right) - 54\Lambda\left(\frac{\pi}{9} \right) , & \text{ if } g_5 \geq 1.
\end{cases} \label{eqn14}
\end{equation}

Calculating the values of the Lobachevsky function specified in~(\ref{eqn13}) and~(\ref{eqn14}) with an accuracy of up to six digits, we get the inequality
\begin{equation}
\volume{(S^3 \setminus K)} < 3.663863 \cdot t_1 + 6.089646 \cdot t_2 + 7.854977 \cdot t_3 + 9.237551 \cdot t_4 + 10.149416 \cdot g_5 - a, \label{eqn15}
\end{equation}
where $a$ takes the following values:
\begin{equation}
a = \begin{cases}
15.497263, & \text{ if } g_2 =0, \\
11.164351, & \text{ if } g_3 = 0 \text{ and } t_2 \geq 1, \\
10.088228, & \text{ if } g_4 = 0 \text{ and } t_3 \geq 1, \\
10.287338, & \text{ if } g_5 = 0 \text{ and } t_4 \geq 1, \\
12.111063, & \text{ if } g_5 \geq 1.
\end{cases} \label{eqn16}
\end{equation}
Note that the formulae (\ref{eqn13})--(\ref{eqn16}) give the bound $\operatorname{vol} (S^3\setminus {\bf b}(\frac{55}{17})) < 16.0426$.

\smallskip

The following bounds for the volume of an alternating knot in terms of the coefficients of its Jones polynomial were obtained by Dasbach and Lin~\cite{DL07}. Let $K$ be a simple alternating knot that is not toric. Let its Jones polynomial have the form
$$
V_K (t) = a_n t^n + a_{n+1} t^{n+1} \ldots + a_{m-1} t^{m-1} + a_m t^m.
$$
Then
\begin{equation}
v_{oct} (\max(|a_{m-1}, |a_{n+1}| - 1)) \leq \operatorname{vol} (S^3 \setminus K) \leq 10 v_3 (|a_{n+1}| + |a_{m-1}| - 1).
\end{equation}

According to~\cite{LM}, Jones polynomial for ${\bf b}(\frac{55}{17})$ is equal to 
$$
V (t) = t^3-t^4+ 3t^5-5t^6+ 7t^7-8t^8+ 9t^9-8t^{10}+ 6t^{11}-4t^{12}+ 2t^{13}-t^{14}.
$$
Hence, the following bounds hold: 
$$
2 \cdot v_{oct}\leq\operatorname{vol} \left(S^3 \setminus {\bf b} \left(\frac{55}{17}\right) \right) \leq 20 \cdot v_3.
$$

Knot ${\bf b} \left(\frac{55}{17}\right)$ belongs to the class of two-bridge knots and links. Recall~\cite{Ka96}, that for coprime integers $p$ and $q$ such that $p\geq 2$ and $0<q<p$, the two-bridge link ${\bf b}(\frac{p}{q})$ is defined. If $p$ is odd, then ${\bf b}(\frac{p}{q})$ is a knot. If $p$ is even, then ${\bf b}(\frac{p}{q})$ is a 2-component link. The diagram of the link ${\bf b}(\frac{p}{q})$ is determined by the decomposition of the rational number $\frac{p}{q}$ into a continued fraction. Namely, if $\frac{p}{q}= [a_1, \ldots, a_n]$, where
$$
[a_1,a_2,\ldots,a_{n-1},a_n]=a_1 +
\frac{\displaystyle 1}{\displaystyle a_2 + \, \cdots\, +
	\frac{1}{a_{n-1}+\frac{1}{a_{n}}}},
$$
then ${\bf b}(\frac{p}{q})$ has a diagram as in the Figure~\ref{fig12} depending on whether $n$ is even or odd, where $a_1, \ldots, a_n$ indicate the number of half-turns on two strands. Diagrams of two-bridge knots (odd $n$) and links (even $n$) shown in Figure~\ref{fig12} are called Conway's normal forms.
\begin{figure}[h]
\begin{center} 
\scalebox{0.7}{
\begin{tikzpicture} 
 \pic[
  rotate=90,
  braid/.cd,
  every strand/.style={ultra thick},
  strand 1/.style={black},
  strand 2/.style={black},
  strand 3/.style={black},] 
at (0,0) {braid={s_2^{-1}}}; 
\node (A) at (2.,1.8) {$a_1$};
\node (A) at (2.,1.5) {$\dots$};
 \pic[
  rotate=90,
  braid/.cd,
  every strand/.style={ultra thick},
  strand 1/.style={black},
  strand 2/.style={black},
  strand 3/.style={black},] 
at (2.5,0) {braid={s_2^{-1}}}; 
\draw  [ultra thick, black]  (1.5,0) -- (2.5,0);
 \pic[
  rotate=90,
  braid/.cd,
  every strand/.style={ultra thick},
  strand 1/.style={black},
  strand 2/.style={black},
  strand 3/.style={black},] 
at (4,0) {braid={s_1}}; 
\node (A) at (6.,0.8) {$a_2$};
\node (A) at (6.,0.5) {$\dots$};
 \pic[
  rotate=90,
  braid/.cd,
  every strand/.style={ultra thick},
  strand 1/.style={black},
  strand 2/.style={black},
  strand 3/.style={black},] 
at (6.5,0) {braid={s_1}}; 
\draw  [ultra thick, black]  (4,2) -- (8,2);
 \pic[
  rotate=90,
  braid/.cd,
  every strand/.style={ultra thick},
  strand 1/.style={black},
  strand 2/.style={black},
  strand 3/.style={black},] 
at (8,0) {braid={s_2^{-1}}}; 
\node (A) at (10.5,0) {$\dots$};
\node (A) at (10.5,1) {$\dots$};
\node (A) at (10.5,2) {$\dots$};
 \pic[
  rotate=90,
  braid/.cd,
  every strand/.style={ultra thick},
  strand 1/.style={black},
  strand 2/.style={black},
  strand 3/.style={black},] 
at (11.5,0) {braid={s_2^{-1}}}; 
\node (A) at (13.5,1.8) {$a_n$};
\node (A) at (13.5,1.5) {$\dots$};
\draw  [ultra thick, black]  (13,0) -- (14,0);
 \pic[
  rotate=90,
  braid/.cd,
  every strand/.style={ultra thick},
  strand 1/.style={black},
  strand 2/.style={black},
  strand 3/.style={black},] 
at (14,0) {braid={s_2^{-1}}}; 
\draw  [ultra thick, black]  (0,3) -- (15.5,3);
 \draw[ultra thick, black] (0,1) arc (90:270:0.5);
  \draw[ultra thick, black] (0,3) arc (90:270:0.5);
   \draw[ultra thick, black] (15.5,1) arc (90:-90:0.5);
    \draw[ultra thick, black] (15.5,3) arc (90:-90:0.5);
 \pic[
  rotate=90,
  braid/.cd,
  every strand/.style={ultra thick},
  strand 1/.style={black},
  strand 2/.style={black},
  strand 3/.style={black},] 
at (0,4) {braid={s_2^{-1}}}; 
\node (A) at (2.,5.8) {$a_1$};
\node (A) at (2.,5.5) {$\dots$};
 \pic[
  rotate=90,
  braid/.cd,
  every strand/.style={ultra thick},
  strand 1/.style={black},
  strand 2/.style={black},
  strand 3/.style={black},] 
at (2.5,4) {braid={s_2^{-1}}}; 
\draw  [ultra thick, black]  (1.5,4) -- (2.5,4);
 \pic[
  rotate=90,
  braid/.cd,
  every strand/.style={ultra thick},
  strand 1/.style={black},
  strand 2/.style={black},
  strand 3/.style={black},] 
at (4,4) {braid={s_1}}; 
\node (A) at (6.,4.8) {$a_2$};
\node (A) at (6.,4.5) {$\dots$};
 \pic[
  rotate=90,
  braid/.cd,
  every strand/.style={ultra thick},
  strand 1/.style={black},
  strand 2/.style={black},
  strand 3/.style={black},] 
at (6.5,4) {braid={s_1}}; 
\draw  [ultra thick, black]  (4,6) -- (8,6);
 \pic[
  rotate=90,
  braid/.cd,
  every strand/.style={ultra thick},
  strand 1/.style={black},
  strand 2/.style={black},
  strand 3/.style={black},] 
at (8,4) {braid={s_2^{-1}}}; 
\node (A) at (10.5,4) {$\dots$};
\node (A) at (10.5,5) {$\dots$};
\node (A) at (10.5,6) {$\dots$};
 \pic[
  rotate=90,
  braid/.cd,
  every strand/.style={ultra thick},
  strand 1/.style={black},
  strand 2/.style={black},
  strand 3/.style={black},] 
at (11.5,4) {braid={s_1}}; 
\node (A) at (13.5,4.8) {$a_n$};
\node (A) at (13.5,4.5) {$\dots$};
\draw  [ultra thick, black]  (13,0) -- (14,0);
 \pic[
  rotate=90,
  braid/.cd,
  every strand/.style={ultra thick},
  strand 1/.style={black},
  strand 2/.style={black},
  strand 3/.style={black},] 
at (14,4) {braid={s_1}}; 
\draw  [ultra thick, black]  (11.5,6) -- (15.5,6);
\draw  [ultra thick, black]  (0,7) -- (15.5,7);
 \draw[ultra thick, black] (0,5) arc (90:270:0.5);
  \draw[ultra thick, black] (0,7) arc (90:270:0.5);
   \draw[ultra thick, black] (15.5,6) arc (90:-90:0.5);
    \draw[ultra thick, black] (15.5,7) arc (90:-90:1.5);
\end{tikzpicture}
}
\end{center}
\caption{Conway's normal forms for 2-bridge links and knots.}\label{fig12}
\end{figure}
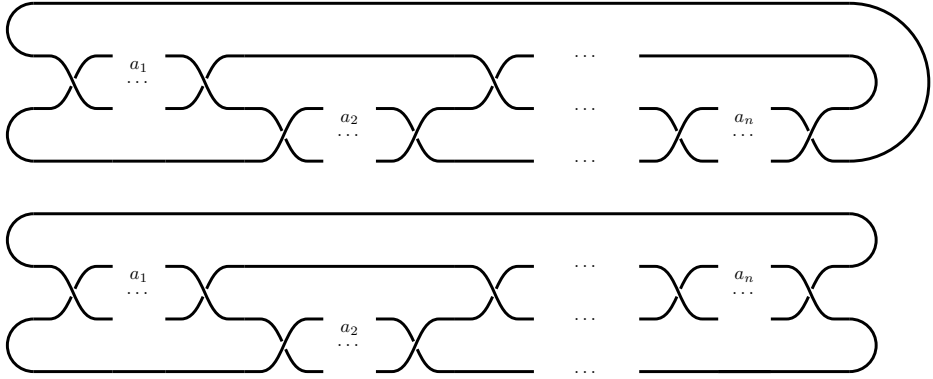

For the volumes of two-bridge links the upper and lower bounds were obtained in~\cite{GF06}. Let $D$ be a reduced alternating diagram of a two-bridge link $K$, then
$$
2 v_{tet}\cdot t(D)-2.7066\leq\operatorname{vol} (S^3\setminus K) \leq 2 v_{oct}\cdot(t(D)-1).
$$
The obtained bounds were used in~\cite{PV09} to estimate Matveev's complexity of hyperbolic 3-manifolds represented as cyclic branched covers of 2-bridge knots and links. 
The proof of the upper bound is based on the fact that the full augmentation without half-turns (see a definition below) of a two-bridge link with $t(D)$ twists is the belt sum of $t(D)$ copies of Borromean rings.
Applying the last bound to ${\bf b}(\frac{55}{17})$, we get:
$$
6 \cdot v_{tet} - 2.7066 \leq\operatorname{vol} \left(S^3 \setminus {\bf b} \left(\frac{55}{17}\right) \right) \leq 4 \cdot v_{oct}.
$$

\medskip

Now we go back to considering arbitrary hyperbolic knots and links. 
In the Theorem~\ref{theoremKnotsBounds} we obtain an inequality that improves the bounds (\ref{eqn12}) and (\ref{eqn13}) for the cases when the number of twists in the diagram is large enough.

\smallskip

\begin{theorem} \label{theoremKnotsBounds}
	Let $D$ be a hyperbolic diagram of a link $K$ with $t(D)$ twists. If $t(D)>$8, then
	\begin{equation}
	\volume{(S^3 \setminus K)} \leq 10 v_{tet} \cdot (t(D)-1.4). \label{eqn17}
	\end{equation}
\end{theorem}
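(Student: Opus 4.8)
The plan is to pass, via a Dehn filling argument, from $S^3\setminus K$ to the complement of a fully augmented link, to cut that complement into two ideal right-angled polyhedra, and finally to bound their volume by the results of Sections~\ref{sec2}--\ref{sec3}.

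After the usual preliminary simplifications one may assume that $D$ is a prime, twist-reduced diagram. Let $L=L(D)$ be the fully augmented link obtained from $D$ by encircling each of the $t=t(D)$ twist regions with a crossing circle and removing all full twists; a leftover half-twist in a twist region may be absorbed by a homeomorphism of the twice-punctured disk it crosses, which does not change the complement, so I may take $L$ to contain no half-twists. Under these assumptions $S^3\setminus L$ is hyperbolic, and $S^3\setminus K$ is obtained from it by Dehn filling the crossing-circle cusps along the slopes that reinsert the twists; by Thurston's Dehn surgery theorem the volume does not increase under filling, so $\volume(S^3\setminus K)\le\volume(S^3\setminus L)$. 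By the Agol--Thurston decomposition of fully augmented link complements (the appendix of \cite{La04}; see also \cite{Pu20}), cutting $S^3\setminus L$ along the projection $2$-sphere and along the twice-punctured disks bounded by the crossing circles produces two isometric copies of an ideal right-angled hyperbolic polyhedron $P_D$, so $\volume(S^3\setminus L)=2\volume(P_D)$. It therefore suffices to prove $\volume(P_D)\le 5v_{tet}(t-1.4)=\tfrac{v_{tet}}{2}(10t-14)$.

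The faces of $P_D$ come in two families: the ``white'' faces, corresponding to the regions of the flattened augmented diagram, and the ``shaded'' faces, corresponding to the twice-punctured disks and contributing (roughly two) triangular faces of $P_D$ per crossing circle. Combining this description with Euler's formula, and using that for a fixed $t$ there are only finitely many $4$-valent planar graphs with $t$ vertices, determines, up to these finitely many cases, the relevant invariants of $P_D$: it has on the order of $3t$ vertices and at least $2t$ triangular faces. Since $t>8$, the polyhedron $P_D$ has more than $24$ vertices, so by \cite[Lemma~2.1]{ABEV} it has a vertex quasi-adjacent to at least four others; taking this vertex as the common apex of the conical subdivision of $P_D$ into ideal pyramids forces at least $13$ of the resulting ideal tetrahedra to degenerate. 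Feeding the vertex count, the number of triangular faces, and this degeneracy count into Lemma~\ref{lemma4.2}(b) (equivalently, into Theorem~\ref{theoremTrianglesBounds} via the identification of $P_D$ with the rectification $\overline{\Gamma}_{tr}$ of an auxiliary polyhedron), and simplifying with care about the number of triangular faces and about how many tetrahedra degenerate, yields $\volume(P_D)\le\tfrac{v_{tet}}{2}(10t-14)$; doubling gives the inequality~(\ref{eqn17}).

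The main obstacle is the third step. One needs the precise combinatorial invariants of $P_D$ — the exact number of vertices and of triangular faces as functions of $t$, uniformly over all diagrams with $t$ twists and in particular in the worst case — rather than just their orders of magnitude, and one must account sharply enough for the degenerate simplices in both polyhedra that the additive constant in the final estimate comes out to exactly $1.4$ and not, say, $1.3$. The hypothesis $t>8$ enters precisely here, so that $P_D$ has more than $24$ vertices and the refinements of Lemma~\ref{lemma4.2}(b) and \cite{ABEV} become available; a secondary point is to verify, by the standard reductions on $D$, that one may assume $L(D)$ hyperbolic, so that the Dehn filling inequality of the first part is legitimate.
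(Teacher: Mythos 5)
Your overall strategy coincides with the paper's: pass to the fully augmented link $L$ (using the Adams half-twist move so that volume is preserved and $\volume(S^3\setminus K)<\volume(S^3\setminus L)$ by Dehn filling), use the Agol--Thurston decomposition of $S^3\setminus L$ into two copies of an ideal right-angled polyhedron $P_D$, and then bound $\volume(P_D)$ by the conical subdivision into bipyramids with apex chosen via \cite[Lemma~2.1]{ABEV}. However, the step you yourself flag as ``the main obstacle'' is a genuine gap, and feeding the data into Lemma~\ref{lemma4.2}(b) as you propose does not close it. The counts are exact, not just ``on the order of'': $P_D$ has $V=3t$ vertices (one per crossing circle plus two obtained by contracting the black edges) and at least $2t$ triangular faces. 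But Lemma~\ref{lemma4.2}(b) with $V=3t$ and $p_3\geq 2t$ gives
$$
\volume(P_D)\leq 2v_{tet}\left(3t-\frac{2t+13}{4}\right)=\frac{v_{tet}}{2}(10t-13),
$$
hence after doubling only $\volume(S^3\setminus K)\leq 10v_{tet}(t-1.3)$. The constant $1.4$ is precisely what the theorem adds over this, so your argument as written proves a strictly weaker statement.

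The missing idea is the chessboard-coloring refinement, which replaces the generic ``$13$ degenerate tetrahedra'' of Lemma~\ref{lemma4.2}(b) by $14$. In the two-coloring of the faces of $P_D$, exactly $2t$ triangles are dark (they come from the crossing-circle disks), and the colors alternate around every ($4$-valent) vertex, so the chosen apex $v$ is incident to exactly two dark triangles and two white faces. One then bounds each dark triangular bipyramid by $2v_{tet}$ (two tetrahedra) but every other bipyramid, including white triangles, by $v_{tet}$ per side; this still yields $\volume(DP)\leq 4v_{tet}\bigl(V-\frac{2t+k}{4}\bigr)$ with $k$ the number of degenerate tetrahedra at $v$. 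The two dark triangles at $v$ contribute $4$ degenerate tetrahedra, and since the four faces at $v$ have at least $16$ sides in total, the two white faces have at least $10$ sides and contribute at least $10$ degenerate tetrahedra (one per side, because white faces are never given the improved triangle bound). Hence $k\geq 14$, and $4v_{tet}\bigl(3t-\frac{2t+14}{4}\bigr)=10v_{tet}(t-1.4)$. Without distinguishing dark from white triangles you cannot rule out that a white triangle at $v$ contributes only $2$ degenerate tetrahedra for its $3$ sides, which is exactly why the general lemma only gives $13$.
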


\begin{proof}
	Starting from a diagram of $K$ we will construct a link $L$ such that $\operatorname{vol} (S^3\setminus K) <\operatorname{vol} (S^3\setminus L)$ and bound the volume of the manifold $S^3\setminus L$ by splitting it into two ideal right-angled polyhedra.
	
	\underline{Step 1.} 
	Let $D$ be a diagram of a link $K$ with $t = t(D)$ twists. Denote the lengths of the twists by $n_1, n_2, \ldots, n_t$. Similarly to~\cite{La04} and~\cite[Chapter~7]{Pu11}, we construct a new link from the diagram $D$ as follows. If $| n_i | \geq 2$ then replace the maximum possible number of full turns $\lfloor\frac{|n_i|}{2}\rfloor$ in the $i$-th twist, $i=1, \ldots, t$, with a new link component that covers this twist. If $|n_i| = 1$ then we just add a new link component that cover the twist. The resulting link $J$ is called \emph{full augmentation} of the link $K$ (see, for example,~\cite{Pu11}). Thus, $J$ has $t(D)$ more components than the original link $K$. The new components in $J$ will be further referred to as \emph{vertical}. Note that the initial link $K$ can be obtained by Dehn's surgeries  on vertical components of $J$. For example, the knot diagram ${\bf b} (\frac{55}{17})$, shown in Figure~\ref{fig5}, has twists of length $3$, $4$ and $4$.
	The corresponding link $J_4$ has $4$ components and shown in the Figure~\ref{fig6}.
	\begin{figure}[h]
		\begin{center}
			\scalebox{0.8}{
				\begin{tikzpicture}
				\pic[
				rotate=90,
				braid/.cd,
				every strand/.style={ultra thick},
				strand 1/.style={black},
				strand 2/.style={black},
				strand 3/.style={black},
				strand 4/.style={black},]
				at (0,0) {braid={s_2^{-1}}};
				\draw [ultra thick, black] (0,3) -- (6,3);
				\draw[ultra thick, black] (0,1) arc (90:270:0.5);
				\draw[ultra thick, black] (0,3) arc (90:270:0.5);
				\draw[ultra thick, black] (6,1) arc (90:-90:0.5);
				\draw[ultra thick, black] (6,3) arc (90:-90:0.5);
				\draw[ultra thick, red] (2,2.25) arc (45:315: 0.2cm and 1cm);
				\draw [ultra thick, red] (2,1.2) -- (2,1.8);
				\draw[ultra thick, red] (5,2.25) arc (45:315: 0.2cm and 1cm);
				\draw [ultra thick, red] (5,1.2) -- (5,1.8);
				\draw[ultra thick, red] (3.5,1.25) arc (45:315: 0.2cm and 1cm);
				\draw [ultra thick, red] (3.5,0.2) -- (3.5,0.8);
				\draw [ultra thick, black] (1,0) -- (3,0);
				\draw [ultra thick, black] (1.8,1) -- (3,1);
				\draw [ultra thick, black] (3.3,1) -- (4.5,1);
				\draw [ultra thick, black] (1.8,2) -- (4.5,2);
				\draw [ultra thick, black] (4.8,1) -- (6,1);
				\draw [ultra thick, black] (4.8,2) -- (6,2);
				\draw [ultra thick, black] (3.3,0) -- (6,0);
				\end{tikzpicture}
			}
		\end{center}
		\caption{Four-component link $J_4$.} \label{fig6}
	\end{figure}
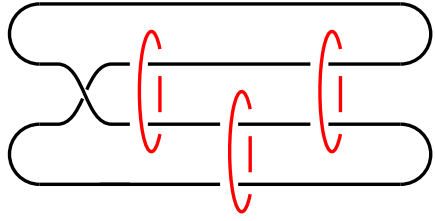
Three of the four components of the link $J_4$ are vertical, and surgeries on these components give the knot ${\bf b}(\frac{55}{17})$. Hence, $\operatorname{vol} (S^3\setminus {\bf b}(\frac{55}{17})) < \operatorname{vol} (S^3 \setminus J_4)$.
	
\underline{Step 2.} If the twist in $D$ had an odd length, then one half-turn will remain from it in the diagram of a link $J$. Let us  change the link $J$ to the link $L$ which does not have such half-turns. To do this, we apply \emph{Adams transformation}, shown in the Figure~\ref{fig100}, the required number of times. The resulting link $L$ is called \emph{full augmentation without half-turns} of the link $K$ (see, for example, terminology from~\cite{Kw20}).
\begin{figure}[h]
\begin{center} 
\scalebox{1.0}{
\begin{tikzpicture} 
 \draw  [ultra thick, black]  (6.5,2) -- (7.5,2);
\draw  [ultra thick, black]  (6.5,1) -- (7.5,1);
\draw  [ultra thick, black]  (7.8,2) -- (9,2);
\draw  [ultra thick, black]  (7.8,1) -- (9,1);
\draw[ultra thick, red]  (8,2.1)  arc (50:320: 0.2cm and 0.9cm);
\draw  [ultra thick, red]  (8,1.2) -- (8,1.8);
\node  at (7.5,0) {(2)};
\pic[
 rotate=90,
 braid/.cd,
 every strand/.style={ultra thick},
 strand 1/.style={black},
 strand 2/.style={black}] 
at (0,1) {braid={s_1^{-1} }}; 
\draw  [ultra thick, black]  (1.8,2) -- (3,2);
\draw  [ultra thick, black]  (1.8,1) -- (3,1);
\draw[ultra thick, red]  (2,2.1)  arc (50:320: 0.2cm and 0.9cm);
\draw  [ultra thick, red]  (2,1.2) -- (2,1.8);
\node  at (1.5,0) {(a)};
 \end{tikzpicture}
}
\end{center}
\caption{Adams transformation.} \label{fig100}
\end{figure}
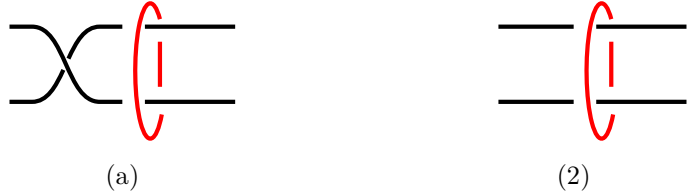
As shown by Adams~\cite[Corollary~5.1]{Ad85}, the application of this transformation preserves the property of link to be hyperbolic and does not change the volume, hence $\operatorname{vol}(S^3\setminus J) = \operatorname{vol}(S^3\setminus L)$. Note that links $J$ and $L$ have the same number of vertical components and $L$ has no half-turns in the diagram. For example, the link $L_5$ obtained by Adams transformation from the link $J_4$ is shown in Figure~\ref{fig7}, it has $5$ components, three of which are vertical.
\begin{figure}[h]
\begin{center} 
\scalebox{0.8}{
\begin{tikzpicture} 
\draw  [ultra thick, black]  (0,3) -- (5,3);
 \draw[ultra thick, black] (0,1) arc (90:270:0.5);
  \draw[ultra thick, black] (0,3) arc (90:270:0.5);
   \draw[ultra thick, black] (5,1) arc (90:-90:0.5);
    \draw[ultra thick, black] (5,3) arc (90:-90:0.5);
\draw[ultra thick, red]  (1,2.25)  arc (45:315: 0.2cm and 1cm);
\draw  [ultra thick, red]  (1,1.2) -- (1,1.8);
\draw  [ultra thick, black]  (0,1) -- (0.5,1);
\draw  [ultra thick, black]  (0,2) -- (0.5,2);
\draw  [ultra thick, black]  (0,0) -- (2,0);
\draw  [ultra thick, black]  (0.8,1) -- (2,1);
\draw  [ultra thick, black]  (2.3,1) -- (3.5,1);
\draw  [ultra thick, black]  (0.8,2) -- (3.5,2);
\draw[ultra thick, red]  (4,2.25)  arc (45:315: 0.2cm and 1cm);
 \draw  [ultra thick, red]  (4,1.2) -- (4,1.8);
%
\draw[ultra thick, red]  (2.5,1.25)  arc (45:315: 0.2cm and 1cm);
 \draw  [ultra thick, red]  (2.5,0.2) -- (2.5,0.8);
 \draw  [ultra thick, black]  (3.8,1) -- (5,1);
 \draw  [ultra thick, black]  (3.8,2) -- (5,2);
 \draw  [ultra thick, black]  (2.3,0) -- (5,0);
\end{tikzpicture}
}
\end{center}
\caption{Five-component link $L_5$.} \label{fig7}
\end{figure}
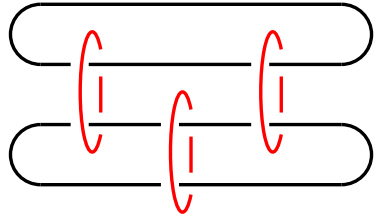

\underline{Step 3.} Applying the method from~\cite{La04}, we decompose  $S^3\setminus L$ to the union of two copies of an ideal right-angled hyperbolic polyhedron. Firstly , we replace each vertical component of the link $L$ with a pair of triangles with a common vertex as shown in Figure~\ref{fig8} for the link $L_5$. In this case, the edges corresponding to the triangles we will call red, and the edges connecting the triangles we will call black.
\begin{figure}[h]
\begin{center} 
\scalebox{1.0}{
\begin{tikzpicture} 
\draw  [ultra thick, black]  (0,3) -- (4,3);
 \draw[ultra thick, black] (0,1) arc (90:270:0.5);
  \draw[ultra thick, black] (0,3) arc (90:270:0.5);
   \draw[ultra thick, black] (4,1) arc (90:-90:0.5);
    \draw[ultra thick, black] (4,3) arc (90:-90:0.5);
\filldraw  [ultra thick, cyan!30]  (0.5,1.5) -- (0,1) -- (0,2) -- (0.5,1.5);
\filldraw  [ultra thick, cyan!30]  (0.5,1.5) -- (1,1) -- (1,2) -- (0.5,1.5);
\draw  [ultra thick, red]  (0,1) -- (0.5,1.5);
\draw  [ultra thick, red]  (0,2) -- (0.5,1.5);
\draw  [ultra thick, red]  (0,1) -- (0,2);
\draw  [ultra thick, red]  (1,1) -- (0.5,1.5);
\draw  [ultra thick, red]  (1,2) -- (0.5,1.5);
\draw  [ultra thick, red]  (1,1) -- (1,2);
\draw  [ultra thick, black]  (0,0) -- (1.5,0);
\draw  [ultra thick, black]  (1,1) -- (1.5,1);
\draw  [ultra thick, black]  (2.5,1) -- (3,1);
\draw  [ultra thick, black]  (1,2) -- (3,2);
\filldraw [line width=2pt, red] (0.5,1.5) circle[radius=0.06cm];
%
\filldraw  [ultra thick, cyan!30]  (3.5,1.5) -- (3,1) -- (3,2) -- (3.5,1.5);
\filldraw  [ultra thick, cyan!30]  (3.5,1.5) -- (4,1) -- (4,2) -- (3.5,1.5);
\draw  [ultra thick, red]  (3,1) -- (3.5,1.5);
\draw  [ultra thick, red]  (3,2) -- (3.5,1.5);
\draw  [ultra thick, red]  (3,1) -- (3,2);
\draw  [ultra thick, red]  (4,1) -- (3.5,1.5);
\draw  [ultra thick, red]  (4,2) -- (3.5,1.5);
\draw  [ultra thick, red]  (4,1) -- (4,2);
\filldraw [line width=2pt, red] (3.5,1.5) circle[radius=0.06cm];
%
\filldraw  [ultra thick, cyan!30]  (2,0.5) -- (1.5,0) -- (1.5,1) -- (2,0.5);
\filldraw  [ultra thick, cyan!30]  (2,0.5) -- (2.5,0) -- (2.5,1) -- (2,0.5);
\draw  [ultra thick, red]  (1.5,0) -- (2,0.5);
\draw  [ultra thick, red]  (1.5,1) -- (2,0.5);
\draw  [ultra thick, red]  (1.5,0) -- (1.5,1);
\draw  [ultra thick, red]  (2.5,0) -- (2,0.5);
\draw  [ultra thick, red]  (2.5,1) -- (2,0.5);
\draw  [ultra thick, red]  (2.5,0) -- (2.5,1);
\filldraw [line width=2pt, red] (2,0.5) circle[radius=0.06cm];
%
 \draw  [ultra thick, black]  (2.5,0) -- (4,0);
\end{tikzpicture}
}
\end{center}
\caption{Replacing vertical components of the link $L_5$ with pairs of triangles.} \label{fig8}
\end{figure}
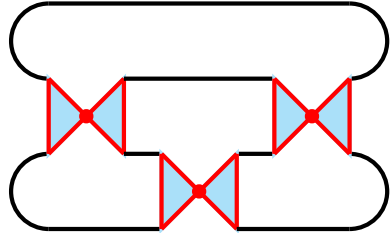
Secondly, we contract each black edge into a point and denote the resulting polyhedron by $P$. Since each black edge was incident to two trivalent vertices, after the black edges are contracted, new quadrivalent vertices will appear. We will call them by \emph{black}. As a result, all vertices of the polyhedron $P$ are quadrivalent, moreover, $t(D)$ of them are red and $2 t(D)$ are black. Thus, $P$ has $V = 3 t(D)$ vertices and at least $2 t(D)$ triangular faces, which, with a two-color chessboard coloring of the faces of the polyhedron, will turn out to be colored in the same color. As noted in~\cite{La04}, the polyhedron $P$ is an ideal right-angled polyhedron and $\operatorname{vol}(S^3\setminus L) = 2\operatorname{vol}(P)$. The polyhedron $P_5$ corresponding to the diagram of the link $L_5$ is shown in the Figure~\ref{fig9}.
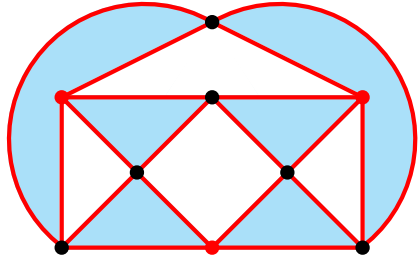
\begin{figure}[h]
\begin{center} 
\unitlength=.1mm
\scalebox{1.0}{
\begin{tikzpicture}[scale=1.0]
  \filldraw  [ultra thick, cyan!30]  (2,3) arc (60:235:1.8);
  \filldraw  [ultra thick, white]  (0,0) -- (0,2) -- (2,3) -- (0,0);
    \filldraw  [thick, cyan!30]  (2,3) arc (120:-55:1.8);
  \filldraw  [ultra thick, white]  (4,0) -- (4,2) -- (2,3) -- (4,0);
  \filldraw  [ultra thick, white]  (0,0) -- (0,2) -- (1,1) -- (0,0); 
    \filldraw  [ultra thick, white]  (4,0) -- (4,2) -- (3,1) -- (4,0);
%
\filldraw  [ultra thick, cyan!30]  (2,0) -- (0,0) -- (1,1) -- (2,0);
\filldraw  [ultra thick, cyan!30]  (2,0) -- (4,0) -- (3,1) -- (2,0);
\filldraw  [ultra thick, cyan!30]  (2,2) -- (0,2) -- (1,1) -- (2,2);
\filldraw  [ultra thick, cyan!30]  (2,2) -- (4,2) -- (3,1) -- (2,2);
 \draw[ultra thick, red] (2,3) arc (60:235:1.8);
  \draw[ultra thick, red] (2,3) arc (120:-55:1.8);
\draw  [ultra thick, red]  (1,1) -- (0,0);
\draw  [ultra thick, red]  (1,1) -- (0,2);
\draw  [ultra thick, red]  (1,1) -- (2,2);
\draw  [ultra thick, red]  (1,1) -- (2,0);
\draw  [ultra thick, red]  (3,1) -- (2,0);
\draw  [ultra thick, red]  (3,1) -- (2,2);
\draw  [ultra thick, red]  (3,1) -- (4,2);
\draw  [ultra thick, red]  (3,1) -- (4,0);
\draw  [ultra thick, red]  (0,0) -- (4,0);
\draw  [ultra thick, red]  (0,2) -- (4,2);
\draw  [ultra thick, red]  (0,0) -- (0,2);
\draw  [ultra thick, red]  (4,0) -- (4,2);
\draw  [ultra thick, red]  (2,3) -- (0,2); 
\draw  [ultra thick, red]  (2,3) -- (4,2);
\filldraw [line width=2pt, black] (0,0) circle[radius=0.06cm];
\filldraw [line width=2pt, red] (2,0) circle[radius=0.06cm];
\filldraw [line width=2pt, black] (4,0) circle[radius=0.06cm];
\filldraw [line width=2pt, black] (1,1) circle[radius=0.06cm];
\filldraw [line width=2pt, black] (3,1) circle[radius=0.06cm];
\filldraw [line width=2pt, red] (0,2) circle[radius=0.06cm];
\filldraw [line width=2pt, black] (2,2) circle[radius=0.06cm];
\filldraw [line width=2pt, red] (4,2) circle[radius=0.06cm];
\filldraw [line width=2pt, black] (2,3) circle[radius=0.06cm];
\end{tikzpicture}
}
\end{center}
\caption{Polyhedron $P_5$ for the link $L_5$.} \label{fig9}
\end{figure}

As in the proof of Lemma~\ref{lemma4.1}, let us consider the union $DP$ of bipyramids obtained by doubling pyramids with a common apex $v$, splitting $P$. The volume of $DP$ is twice the volume of $P$ and therefore coincides with the volume of $\volume{(S^3\setminus L)}$. For $n\geq 4$ each $n$-bipyramid is a union of $n$ ideal tetrahedra. Each triangular bipyramid, corresponding to one of the $2t(D)$ triangular faces of the same color, will be divided into two tetrahedra. The remaining triangular bipyramids will be considered divided into $3$ tetrahedra. Since the number of vertices $V=3t(D)>24$, then (see proof of item (b) of Lemma~\ref{lemma4.2}) the apex $v$ can be chosen so that the sum of the sizes of the four faces adjacent to $v$ is at least $16$. We get the estimate
$$
\volume(DP) \leqslant 4 v_{tet} \cdot \left(V - \frac{2t(D)+k}{4} \right),
$$ 
where $k$ is the number of tetrahedra corresponding to degenerate bipyramids. Let us estimate the value of $k$. The vertex $v$ is adjacent to two triangles that are colored in the same color in the two-color chessboard coloring of the faces of the polyhedron $P$. The two bipyramids that have these triangles as bases correspond to $4$ degenerate tetrahedra. The other two faces adjacent to $v$ can be of any size, but their total number of sides is not less than $10$. So two bipyramids that have these two faces as bases correspond to at least $10$ degenerate tetrahedra. Hence we get that $k\geq 4+10=14$, therefore
$$
\volume{(S^3 \setminus K)} < \volume{(S^3 \setminus L)} = \volume(DP) \leqslant 4 v_{tet} \cdot \left( 3t(D) - \frac{2t(D)+14}{4}\right),
$$
which completes the proof of the Theorem.
\end{proof}

\smallskip

\begin{remark}
{\rm 
The paper~\cite{DT15} gives a bounds that takes into account the number of triangles $\Delta$ of the polyhedron $P$ constructed from the link $K$:
$$
\operatorname{vol} (S^3 \setminus K) \leq  v_{tet}  \cdot (4 t_1 + 6 t_2 + 8 t_3 + 10 g_4 - a - \Delta ).
$$
One can refine the formula (\ref{eqn17}) in a similar way. Indeed, if the polyhedron $P$ constructed from the link $K$ have $\Delta+2t(D)$ triangles, then
\begin{equation}
\volume{(S^3 \setminus K)}  \leq 10 v_{tet} \cdot \left( t(D)-1.3- \frac{\Delta}{10} \right).  \label{eqn18}
\end{equation}
}
\end{remark}

\smallskip 

\begin{remark}
{\rm If link $K$ have a diagram $D$ with $t(D)>8$ twists, then the bound (\ref{eqn18}) improves the bound (\ref{eqn12}).
}
\end{remark}

\smallskip 

\begin{remark}
{\rm 
Assume that link $K$ has such a reduced alternating diagram $D$ that $t(D) > 8$ and all twists have a length of at least $5$, that is, $t_1 = t_2 = t_3 = t_4 = 0$. Then the bound (\ref{eqn18}) improves the bound (\ref{eqn13}). In fact, comparing these estimates, we get
$$
10 v_{tet} (t(D) - 1.4) < 10 v_{tet} \, t(D) -12.111063,
$$
since $14 v_{tet} = 14.209174$.
}
\end{remark}

Note that apart from the upper bounds for the volumes of hyperbolic links in terms the number of twists, there are few lower bounds. As an example, let us consider the bound obtained in~\cite{FKP}. Suppose the link $K$ have such a simple reduced alternating diagram $D$ that $t(D)\geq 2$ and all twists have a length of at least $7$, that is, $t_1 = t_2 = t_3 = t_4 = t_5=t_6 = 0$. Then
\begin{equation}
0.70735 \cdot (t(D)-1) < \volume{(S^3 \setminus K)}.  \label{eqn19}
\end{equation}

\smallskip

\begin{remark}
{\rm 
The bound from the Theorem~\ref{theoremKnotsBounds} can be clarified if there is an information about which faces, other than $2 t(D)$ triangular, are present in the polyhedron $P$ constructed from the diagram $D$. Recall that with a two-color chess coloring, $2t(D)$ triangles have one color, we call it \emph{dark}, and the other polygons have a different color, we call it \emph{white}. Denote by $f_n$, $n\geq 3$, the number of white $n$-gons in $P$. For example, for the polyhedron $P$ shown in Figure~\ref{fig9}, we have $f_3= 3$, $f_4=2$, $f_n=0$, $n\geq 5$.
}
\end{remark}

\smallskip

\begin{corollary}
 	Let $D$ be the diagram of the hyperbolic link $K$. Let $f_n$ be the number of white $n$-gonal faces in the ideal right-angled polyhedron $P$ constructed from a full augmentation without half-turns of the link $K$. Then
 	$$
 	\operatorname{vol} (S^3 \setminus K) \leq (4 t(D) - 8) v_{tet} + 2 \sum_{n \geq 3} n f_n \Lambda \left( \frac{\pi}{n} \right).
 	$$
\end{corollary}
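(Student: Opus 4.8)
The argument is a direct combination of the proof of Theorem~\ref{theoremKnotsBounds} with Lemma~\ref{lemma4.1}. First I would carry out Steps~1--3 of the proof of Theorem~\ref{theoremKnotsBounds} verbatim: pass from $K$ to its full augmentation $J$, then to the full augmentation without half-turns $L$, so that
\[
\operatorname{vol}(S^3\setminus K)<\operatorname{vol}(S^3\setminus J)=\operatorname{vol}(S^3\setminus L),
\]
and use the decomposition of Step~3 of $S^3\setminus L$ into two copies of the ideal right-angled polyhedron $P$, which gives $\operatorname{vol}(S^3\setminus L)=2\operatorname{vol}(P)$. At this point I would record the combinatorics of $P$: it has $V=3t(D)$ quadrivalent vertices, hence $E=6t(D)$ edges and, by Euler's formula, $3t(D)+2$ faces; under the chessboard colouring the dark class consists of exactly the $2t(D)$ triangles, and the white faces are precisely those enumerated by the numbers $f_n$ from the statement, so $\sum_{n\ge3} n f_n=E=6t(D)$ and there are $t(D)+2$ white faces altogether.

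Next I would apply Lemma~\ref{lemma4.1} to the polyhedron $P$ itself. If $p_n$ denotes the number of all $n$-gonal faces of $P$, then $p_3=2t(D)+f_3$ and $p_n=f_n$ for $n\ge4$, so Lemma~\ref{lemma4.1} gives
\[
\operatorname{vol}(P)\le \sum_{n\ge3}\Lambda\!\left(\tfrac{\pi}{n}\right)p_n n-4v_{tet}=6t(D)\,\Lambda\!\left(\tfrac{\pi}{3}\right)+\sum_{n\ge3} n f_n\,\Lambda\!\left(\tfrac{\pi}{n}\right)-4v_{tet}.
\]
Since $6\Lambda(\pi/3)=2v_{tet}$, doubling this inequality and using $\operatorname{vol}(S^3\setminus K)<\operatorname{vol}(S^3\setminus L)=2\operatorname{vol}(P)$ yields the claimed bound at once.

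The only point that needs care is the bookkeeping just described: one must verify that every dark face of $P$ is a triangle, so that the dark contribution is controlled purely by $t(D)$, and that the white faces — including the two white faces that collapse once a vertex $v$ of $P$ is taken as the common apex in the decomposition of Lemma~\ref{lemma4.1} — are exactly the faces counted by the $f_n$. Both facts follow from the explicit description of $P$ in Step~3 of the proof of Theorem~\ref{theoremKnotsBounds} together with the Euler count above; after that the estimate is an immediate consequence of Lemma~\ref{lemma4.1}. Note that, unlike in Theorem~\ref{theoremKnotsBounds}, no hypothesis $t(D)>8$ is required here: since every vertex of $P$ lies on two dark triangles, an arbitrary vertex may serve as the apex, and there is no need to invoke \cite[Lemma~2.1]{ABEV}.
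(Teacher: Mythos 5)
Your proposal is correct and follows essentially the same route as the paper: the paper's proof is precisely to apply the bipyramid decomposition of Lemma~\ref{lemma4.1} to the polyhedron $P$ from Step~3 of Theorem~\ref{theoremKnotsBounds}, using that the $2t(D)$ dark faces are triangles (contributing $6t(D)\Lambda(\pi/3)=2t(D)v_{tet}$) and the white faces are counted by the $f_n$, then doubling. Your bookkeeping ($p_3=2t(D)+f_3$, $p_n=f_n$ for $n\ge 4$, the $-4v_{tet}$ correction becoming $-8v_{tet}$ after doubling) reproduces the stated bound exactly, and your observation that no hypothesis $t(D)>8$ is needed here is also consistent with the statement.
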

 
\begin{proof}
		As in the proof of Lemma~\ref{lemma4.1}, we use the fact that the volume of $n$-bipyramid is at most $2 n\Lambda (\frac{\pi}{n})$.
\end{proof}
	


\begin{thebibliography}{00} 

\bibitem{Ad83}
C.~Adams, \textit{Hyperbolic Structures on Link Complements},
Ph.D. thesis, University of Wisconsin, 1983.	

\bibitem{Ad85}
C.~Adams, \textit{Thrice-punctured spheres in hyperbolic 3-manifolds},
Trans. Amer. Math. Soc.,  \textbf{287} (1985), 645--656.		


\bibitem{Ad13}
C.~Adams, \textit{Triple crossing number of knots and links},
J. Knot Theory Ramifications,  \textbf{22:2} (2013),  paper number 1350006.

\bibitem{Ad17-1}
C.~Adams, \textit{Bipyramids and bounds on volumes of hyperbolic links},
Topology and its Applications,  \textbf{222} (2017), 100--114.		
	
		
\bibitem{ABEV}
S.~Alexandrov, N.~Bogachev, A.~Egorov, A.~Vesnin, \textit{On volumes of hyperbolic right-angled polyhedra}, Sbornik Math.    \textbf{214:2} (2023), 3--22. 
(in Russian). English preprint version is available at \url{https://arxiv.org/abs/2111.08789}. 

\bibitem{An70a}
E.\,M.~Andreev, \textit{On convex polyhedra in Lobacevskii spaces},  Math. USSR-Sb., \textbf{10:3} (1970), 413--440.

\bibitem{An70b}
E.M.~Andreev, \emph{On convex polyhedra of finite volume in Lobachevskii space,} Math. USSR-Sb. \textbf{12:2} (1970), 255--259. 

\bibitem{At09}
C.~Atkinson, \textit{Volume estimates for equiangular hyperbolic Coxeter polyhedra,} Algebraic \& Geometric Topology \textbf{9} (2009), 1225--1254.
		
\bibitem{At11}
C.~Atkinson, \textit{Two-sided combinatorial volume bounds for non-obtuse hyperbolic polyhedra,} Geomeriae Dedicata \textbf{153} (2011), 177--211.
		
\bibitem{BB02}
X.~Bao, F.~Bonahon, \textit{Hyperideal polyhedra in hyperbolic 3-space}, Bull. Soc. Math. France, \textbf{130:3}, (2002), 457--491,
		
\bibitem{Be21}
G.~Belletti, \textit{The maximum volume of hyperbolic polyhedra}, Trans. Amer. Math. Soc., \textbf{374} (2021), 1125--1153.  		
		
\bibitem{Br05}
G.~Brinkmann, S.~Greenberg, C.~Greenhill, B.\,D.~McKay, R.~Thomas, P.~Wollan, \textit{Generation of simple quadrangulations of the sphere}, Discrete Mathematics \textbf{305} (2005), 33--54.
		
		
\bibitem{CK99}
Y.~Cho, H.~Kim, \textit{On the volume formula for hyperbolic tetrahedra}, Discrete  Comput. Geom. \textbf{22} (1999), 347--366.
		
\bibitem{Co34}
H.\,S.\,M.~Coxeter, \textit{Discrete groups generated by reflections}, Ann. Math., \textbf{35:2} (1934), 588-621.
		
\bibitem{DL07}
O.\,T.~Dasbach, X.-S.~Lin, \textit{A volumish theorem for the Jones polynomial of alternating knots}, Pacific Journal of Mathematics \textbf{231:2} (2007), 279--291.
		
\bibitem{DT15}
O.~Dasbach, A.Tsvietkova, \textit{A refined upper bound for the hyperbolic volume of alternating links and the colored Jones polynomial}, Math Res. Letters, \textbf{22} (2015), 1047--1060.		
		
\bibitem{DT19}
O.~Dasbach, A.Tsvietkova, \textit{Simplicial volume of links from link diagrams}, Mathematical Proceedings of the Cambridge Philosophical Society, \textbf{166:1} (2019), 75--81.		
		
\bibitem{EV20-1} 
A.~Egorov, A.~Vesnin, \textit{Ideal right-angled polyhedra in Lobachevsky space},  Chebyshevskii Sbornik \textbf{21:2} (2020),  65--83. \url{http://doi.org/10.22405/2226-8383-2020-21-2-65-83.} 
		
\bibitem{EV20-2} 
A.~Egorov, A.~Vesnin, \textit{Volume estimates for right-angled hyperbolic polyhedra}, Rend. Istit. Mat. Univ. Trieste \textbf{52} (2020), 565--576.  Available at \url{https://rendiconti.dmi.units.it/volumi/52/029.pdf}


\bibitem{FKP} 
D.~Futer, E.~Kalfagianni, and J.\,S.~Purcell, \textit{Dehn filling, volume, and the Jones polynomial}, J. Differential Geom. \textbf{78:3} (2008), 429--464. 


\bibitem{GF06}
F.~Gu\'{e}ritaud, D.~Futer \textit{On canonical triangulations of once- punctured torus bundles and two-bridge link complements},   Geom. Topol. \textbf{10} (2006), 1239--1284.

\bibitem{Ka97}
R.\,M.~Kashaev, \textit{The hyperbolic volume of knots from quantum dilogarithm}, Letters in Mathematical Physics \textbf{39} (1997). 269--275.

\bibitem{Ka96}
A.~Kawauchi, \textit{A survey of knot theory}, Birkh\"auser, Basel, 1996, 423~pp.	
		
\bibitem{Ke89}
R.~Kellerhals, \textit{On the volume of hyperbolic polyhedra}, Mathematische Annalen \textbf{285} (1989) 541--569. 

\bibitem{Ke22}
R.~Kellerhals, \textit{A polyhedral approach to the arithmetic and geometry of hyperbolic chain link complements,} preprint available at  \url{https://homeweb.unifr.ch/kellerha/pub/Kellerhals.pdf}

\bibitem{Kw20}
A.~Kwon, \textit{Fully Augmented Links in the Thickened Torus}, preprint version available at \url{https://arxiv.org/abs/2007.12773}. 
		
\bibitem{LM}		
C.~Livingston,  A.\,H~ Moore, \textit{KnotInfo: Table of Knot Invariants}, available at \url{http://knotinfo.math.indiana.edu} 

	
\bibitem{La04}
M.~Lackenby, \textit{The volume of hyperbolic alternating link complements. With an appendix by I.~Agol and D.~Thurston},   
Proc. London Math. Soc. \textbf{88} (2004),  204---224.
		
\bibitem{MMT20} 
J.S.~Meyer, C.~Millichap, R.~Trapp, \textit{Arithmeticity and hidden symmetries of fully augmented pretzel link complements}, New York J. Math. \textbf{26} (2020), 149--183. 
		
\bibitem{Mi82}
J.~Milnor, \textit{Hyperbolic geometry: the first 150 years}, Bulletin Amer. Math. Soc. \textbf{6} (1982), 9--24.
		
\bibitem{MY05}
J.~Murakami, M.~Yano, \textit{On the volume of a hyperbolic and spherical tetrahedron}, Communication in Analysis and Geometry \textbf{13:2} (2005), 379-400.

\bibitem{PV09}
C.~Petronio, A.~Vesnin, \textit{Two-sided bounds for the complexity of cyclic branched coverings of two-bridge links}, Osaka J. Math. \textbf{46} (2009), 1077--1095.

\bibitem{Pu11}
J.\,S.~Purcell, \textit{An introduction to fully augmented links}, Interactions between hyperbolic geometry, quantum topology and number theory, Contemp. Math., \textbf{541}, Amer. Math. Soc., Providence, RI, (2011), 205--220.

\bibitem{Pu20}
J.\,S.~Purcell, \textit{Hyperbolic knot theory}, Graduate Studies in Mathematics, Amer. Math. Soc. \textbf{209} (2020).

\bibitem{RHD07}
R.\,K.\,W.~Roeder, J.\,H.~Hubbard, W.\,D.~Dunbar, \textit{Andreev's theorem on hyperbolic polyhedra}, Ann. Inst. Fourier, Grenoble, \textbf{57:3} (2007), 825--882. 

\bibitem{Ro76}
D.~Rolfsen, \textit{Knots and Links}, AMS Chelsea Publishing, 1976, 439~pp.

\bibitem{Snap}
SnapPy, a computer program available at \url{https://snappy.math.uic.edu}

\bibitem{Th80}
W.~Thurston, \textit{The Geometry and Topology of Three-Manifolds}, Lecture notes, Princeton, 1980, available at \url{http://www.msri.org/publications/books/gt3m/} 

\bibitem{Us06}
A.~Ushijima, \textit{A volume formula for generalized hyperbolic polyhedra}, in Non-Euclidean Geometries. Andras Prekopa and Emil Molnar (Ed.), Mathematics and Its Applications \textbf{581} (2006), 249--265. 

\bibitem{Vi93}
E.\,B.~Vinberg, \textit{Volumes of non-Euclidean polyhedra}, Russian Math. Surveys, \textbf{48:2} (1993), 15--45. 

		
	
\end{thebibliography}
\end{document}